\def\N{{\hbox{\bf N}}}
\def\Re{\operatorname{Re}}
\newenvironment{proof}{\noindent {\bf Proof} }{\endprf\par}
\def \endprf{\hfill  {\vrule height6pt width6pt depth0pt}\medskip}
\def\emph#1{{\it #1}}
\def\textbf#1{{\bf #1}}
\theoremstyle{plain}
  \newtheorem{theorem}[subsection]{Theorem}
  \newtheorem{lemma}[subsection]{Lemma}
\theoremstyle{remark}
\theoremstyle{definition}
  \newtheorem{definition}[subsection]{Definition}
\begin{document}

\title[The prime divisors
contain arbitrary large truncated classes]{The prime divisors in every class contain arbitrary large truncated
classes}

\author{Chunlei Liu}
\address{Department of Mathematics, Shanghai Jiao Tong University, Shanghai, 200240}
\email{clliu@@sjtu.edu.cn}

\begin{abstract}  Let ${\mathbb F}_q$ be the finite field with $q$ elements, and $\hat{C}$ a projective curve
over ${\mathbb F}_q$. We show that the prime divisors on $\hat{C}$
in every class contain arbitrary large truncated generalized classes
of finite effective divisors.\end{abstract}

\maketitle

\section{Introduction}
 Green-Tao \cite{gt-ap} proved that the primes contains arbitrary long
arithmetic progression. Thai \cite{thai} proved a polynomial analog
of the above result. In this paper we shall prove a more general geometric
analog.

Let ${\mathbb F}_q$ be the finite field with $q$ elements, $\hat{C}$
a projective curve over ${\mathbb F}_q$, and $K$ the function field
of $\hat{C}$. We fix an element $t\in K$ which is transcendental
over ${\mathbb F}_q$. \begin{definition}If $P$ is a pole of $t$,
then we call $P$ a point at infinity, and write
$P\mid\infty$.\end{definition}Let $C$ be the finite part of
$\hat{C}$.
\begin{definition} For each nonzero function $f\in K$, the divisor of $f$ on $C$ is defined to be
$$(f):=\sum_{P\in C}{\rm ord}_P(f)\cdot P.$$
\end{definition}

\begin{definition}For every divisor $D$ on $C$, we write
$$L(D):=\{f\in K^{\times}\mid (f)+D\geq0\}\cup\{0\}.$$
\end{definition}
\begin{definition} Let $M,D_1,D_2$ be effective divisors on $C$. If there is a function $f\in K^{\times}$ such that
$f-1\in L(D_1-M)$ and $D_2=(f)+D_1$, then $D_2$ is said to be equivalent to $D_1$ modulo $M$.
\end{definition}
\begin{definition}Let $M$ and $D$ be two divisors on $C$ such that $D\geq
M$. Let $a\in L(D)$ and $r>0$. We call
$$\{f\in L(D)\mid f-a\in L(M), {\rm ord}_{\infty}(f-a)>-r\}$$
a truncated residue class of $L(D)$, where $${\rm
ord}_{\infty}(f)=\min_{P\mid\infty} \{{\rm ord}_P(f)\}.$$ The truncated class is called principal if $M$ is principal.
\end{definition}
Note that the function ${\rm ord}_{\infty}(\cdot)$ extends to
$$K_{\infty}:=\prod_{P\mid\infty}K_P,$$ where $K_P$ is the
completion of $K$ at $P$, and $K$ is embedded into $K_{\infty}$
canonically.

\begin{definition}Let $D$ be a divisor on $C$, and let $A$ be a truncated residue class of $L(D)$. We call
$$\{(f)+D\mid f\in A\}$$ a
truncated generalized class of effective divisors on $C$.
\end{definition}
 In this paper we shall prove the following
generalization of the result of Thai in \cite{thai}.

\begin{theorem}\label{main}The prime divisors on $C$ in every equivalence class contain arbitrary large truncated
equivalence classes of effective divisors.\end{theorem}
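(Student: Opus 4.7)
The plan is to carry the Green--Tao transference framework, as adapted by Thai for $\mathbb{F}_q[t]$, into the geometric setting of an arbitrary projective curve $\hat C$. Fix an effective divisor $M$ and a class $\bar a$ of $L(D)$ modulo $M$; the object of interest is the set $\pi(\bar a)$ of prime effective divisors on $C$ whose equivalence class modulo $M$ is $\bar a$. I want to show that once $r$ and $\deg D$ are large enough compared to any prescribed dimension $k$, the set $\pi(\bar a)$ meets the finite $\mathbb{F}_q$-vector space
$$V(D,r):=\{f\in L(D):\operatorname{ord}_\infty(f)>-r\}$$
in a coset of a $k$-dimensional $\mathbb{F}_q$-subspace of the shape prescribed by Definition of truncated residue classes.

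First I would reformulate the target as a relative density statement on $V(D,r)$. On this vector space I build a von Mangoldt / Selberg-type majorant $\nu\colon V(D,r)\to\R_{\ge 0}$, obtained by truncating divisor sums of the form $\sum_{E\mid (f)+D}\mu(E)\log\cdot$ at a sieve level $R$, restricting to $f\equiv a\pmod M$, and applying the usual $W$-trick to kill the small prime divisors on $\hat C$. Using Riemann--Roch together with the Weil bounds for the Dedekind zeta function of $K$, one should check that $\nu$ has mean close to $1$, that a fixed constant multiple of $\nu$ pointwise majorizes the normalized indicator of primes in $\bar a$, and that the latter has positive mean relative to $\nu$. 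Modulo these estimates, the Green--Tao transference principle reduces the theorem to verifying that $\nu$ is pseudorandom with respect to the system of affine-linear forms on $V(D,r)$ that parametrize $k$-dimensional subspace cosets.

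Next I would verify the linear-forms and correlation conditions for $\nu$. By the standard Green--Tao manipulation these reduce to counting problems on $\hat C$ that, after expansion and swapping summation orders, collapse to counts of effective divisors $E$ on $\hat C$ with $\deg E\le R$ lying in certain residue classes and avoiding certain fixed divisors, weighted by M\"obius-type coefficients. Those counts follow from Riemann--Roch on $C$ combined with the Weil Riemann hypothesis for $K$. The correlation condition follows in the same way from the elementary divisor-function bound $\sum_{\deg E\le N}\tau(E)^{2^k}\ll q^{N}N^{O(1)}$ on the fixed curve. Once $\nu$ is known to be pseudorandom, the relative Szemer\'edi theorem for affine subspace cosets in $\mathbb{F}_q$-vector spaces produces the large truncated class of primes inside $\bar a$, proving Theorem~\ref{main}.

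The main obstacle I anticipate is the verification of the linear-forms condition in the \emph{truncated} setting. The sample points $f_i$ are required to satisfy $\operatorname{ord}_\infty(f_i)>-r$, so the affine-linear forms parametrizing $k$-dimensional cosets must be chosen compatibly with the local behavior at each place $P\mid\infty$. This is where the geometric generalization departs most sharply from Thai's polynomial case, since $K_\infty=\prod_{P\mid\infty}K_P$ may be a nontrivial product of several completions rather than a single Laurent series field, and the local contributions at these places must be shown to decouple from the global divisor sum with only an acceptable error. Once that local-global balance is established, the transference argument runs as sketched and completes the proof.
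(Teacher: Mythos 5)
Your sketch is essentially the paper's own route: a $W$-tricked truncated von Mangoldt majorant $\nu$, cross- and auto-correlation estimates obtained from Euler-product manipulations of $\zeta_K$ near $z=1$ (only the simple pole and residue are used, not the full Weil RH you invoke), and a relative Szemer\'edi theorem in the hypergraph-removal style of Tao's Gaussian-primes paper, finished by a pigeonhole over residues $\alpha$ prime to $W$. The local-global obstacle at the several places $P\mid\infty$ that you correctly flag is exactly what the paper resolves with a fundamental-units lemma giving each principal divisor a representative $\xi$ with $\operatorname{ord}_P(\xi)\ge -\deg(\xi)/[K:\mathbb{F}_q(t)]-c_K$ for all $P\mid\infty$; the only technical wrapper you omit is the paper's intermediate passage through the inverse system $\{L(D)/NL(D)\}_{N\in I}$ and the hypergraph on $O_C\cap B_k$ (whose edges $e_j=(O_C\cap B_k)\setminus\{j\}$ encode the patterns, which have size $|O_C\cap B_k|$ rather than being $k$-dimensional cosets) before the removal lemma is applied.
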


 A positive
density version of the above theorem can be proved similarly.

\section{Pseudo-random measures on inverse systems}
In this section we establish the relationship between two kinds of measures on inverse systems.

Let $k$ be a fixed positive integer, $D$ a fixed nonzero effective
divisor on $C$, and $I$ the set of polynomials in ${\mathbb F}_q[t]$
which are prime to every nonzero   function in $O_C\cap B_{k}$,
where $O_C$ is the ring of regular functions on $C$, and
$$B_k:=\{f\in K_{\infty}\mid {\rm ord}_{\infty}(f)>-k\}.$$ Then
$\{L(D)/(NL(D))\}_{N\in I}\}$ is an inverse system of finite groups.
For each $j\in O_C\cap B_k$, we write $e_j=(O_C\cap
B_k)\setminus\{j\}$. Then $(O_C\cap B_k,\{e_j\}_{j\in O_C\cap B_k})$
is a hyper-graph. To each edge $e_j$, we associate the system $\{(
L(D) /N L(D) )^{e_j}\}_{N\in I}$. Thus the system $\{( L(D) /N L(D)
)^{e_j}\}_{N\in I,j\in O_C\cap B_k}$ maybe regarded as an inverse
system on the hyper-graph $(O_C\cap B_k,\{e_j\}_{j\in O_C\cap
B_k})$. For each $j\in O_C\cap B_k$, and for each $N\in I$, let
$\tilde{\nu}_{N,j}$ be a nonnegative function on $( L(D) /N L(D)
)^{e_j}$.
\begin{definition}The system
$\{\tilde{\nu}_{N,j}\}_{N\in I,j\in O_C\cap B_k}$ is
called a \emph{pseudo-random} system of measures on the system $\{ L(D) /(N L(D) )\}_{N\in I, j\in O_C\cap B_k}$ if the following conditions are satisfied.\begin{enumerate}
                                    \item For all $j\in O_C\cap B_k$, and for all $\Omega_j\subseteq\{0,1\}^{e_j}\setminus\{0\}$, one has
$$ \frac{1}{ q^{   |e_j|[K: {\mathbb F}_q(t)  ]\deg N}}\sum_{x^{(1)} \in ( L(D) /N L(D) )^{e_j}}\prod_{\omega \in \Omega_j} \tilde{\nu}_{N,j}(x^{(\omega)})  = O(1),$$
uniformly for all $x^{(0)}\in ( L(D) /N L(D) )^{e_j}$
                                    \item Given any choice $\Omega_j\subseteq\{0,1\}^{e_j}$ for each $j\in O_C\cap B_k$, one has
$$
\frac{1}{ q^{   2|O_C\cap B_k|[K: {\mathbb F}_q(t)  ]\deg N}}\sum_{x^{(0)}, x^{(1)} \in( L(D) /N L(D) )^{O_C\cap B_k}}\prod_{j\in O_C\cap B_k} \prod_{\omega \in \Omega_j} \tilde{\nu}_{N,j}(x^{(\omega)})  = 1 + o(1),
'$$as $  \deg N \to \infty $.
                                    \item For all $j\in O_C\cap B_k$, for all $i \in e_j$, for all $\Omega_j\subseteq \{0,1\}^{e_j}$, and for all $M \in{\mathbb N}$, we have
$$
\frac{1}{ q^{   2(|e_j|-1)[K: {\mathbb F}_q(t)  ]\deg N}}\sum_{x^{(0)}, x^{(1)} \in ( L(D) /N L(D) )^{e_j \backslash \{i\}} }{\rm auto}(x,\tilde{\nu}_{N,j})^M= O(1),
$$
where $${\rm auto}(x,\tilde{\nu}_{N,j}):=\frac{1}{ q^{  2[K: {\mathbb F}_q(t)  ]\deg N }}\sum_{ x_i^{(0)}, x_i^{(1)} \in L(D) /N L(D) }\prod_{\omega \in \Omega_j} \tilde{\nu}_{N,j}( x^{(\omega)} ).$$
                                  \end{enumerate}
\end{definition}
For each positive
integer $N\in I$, let $\tilde{\nu}_N$ be a nonnegative function on $ L(D) /(N L(D) )$. \begin{definition}The system
$\{\tilde{\nu}_N\}$ is said to satisfy the $k$-cross-correlation
condition if, given any positive integers $s\leq |O_C\cap B_k|2^{|O_C\cap B_k|},m\leq
2|O_C\cap B_k|$, and given any mutually independent linear forms
$\psi_1,\cdots,\psi_{s}$ in $m$ variables whose coefficients are
  functions in $O_C\cap B_{k}$, we have
$$\frac{1}{ q^{   m[K: {\mathbb F}_q(t)  ]\deg N}}\sum_{\stackrel{x_i\in  L(D) /(N L(D) )}{i=1,\cdots,m}}
\prod_{j=1}^{s}\tilde{\nu}_N(\psi_j(x)+b_j)=1+o(1),\
N\rightarrow\infty$$ uniformly for all $b_1,\cdots,b_{s}\in L(D)/(N L(D) )$.
\end{definition}
\begin{definition}The system
$\{\tilde{\nu}_N\}$ is said to satisfy the $k$-auto-correlation
condition if, given any positive integers $s\leq |O_C\cap
B_k|2^{|O_C\cap B_k|}$, there exists a system $\{\tilde{\tau}_N\}$ of
nonnegative functions  on $\{ L(D) /(N L(D) )\}$ which obeys
the moment condition
$$\frac{1}{ q^{   [K: {\mathbb F}_q(t)  ]\deg N}}\sum_{x\in L(D) /(N L(D) )}\tilde{\tau}_N^M(x)=O_{M,s}(1),\ \forall M\in{\mathbb N}$$ such that
$$\frac{1}{ q^{   [K: {\mathbb F}_q(t)  ]\deg N}}
\sum\limits_{x\in  L(D) /(N L(D) )}\prod_{i=1}^s
 \tilde{\nu}_N(x +
y_i)\leq\sum_{1\leq i<j\leq
s}\tilde{\tau}_N(y_i-y_j).$$\end{definition}\begin{definition}The
system $\{\tilde{\nu}_N\}$ is called a $k$-pseudo-random system of
measure on the inverse system $\{ L(D) /(N L(D) )\}$ if it satisfies
 the $k$-cross-correlation condition and the $k$-auto-correlation
condition.\end{definition}From now on we assume that
$$\tilde{\nu}_{N, j}(x) := \tilde{\nu}_N(\sum_{i\in e_j}(i-j)x_i).$$\begin{theorem}\label{inversesys}If
$\{\tilde{\nu}_N\}$ is $k$-pseudo-random, then
$\{\tilde{\nu}_{N,j}\}$ is pseudo-random.
\end{theorem}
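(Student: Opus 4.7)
The plan is to verify the three conditions in the definition of a pseudo-random system of measures on the hyper-graph $(O_C\cap B_k,\{e_j\})$ by unfolding
$$\tilde{\nu}_{N,j}(x^{(\omega)})=\tilde{\nu}_N\Bigl(\sum_{i\in e_j}(i-j)\,x_i^{(\omega_i)}\Bigr),$$
so that every product of $\tilde{\nu}_{N,j}$'s becomes a product of $\tilde{\nu}_N$'s evaluated at explicit linear forms with coefficients $(i-j)\in(O_C\cap B_k)\setminus\{0\}$. The $k$-cross-correlation and $k$-auto-correlation hypotheses on $\{\tilde{\nu}_N\}$ will then be applied directly to these linear-form configurations.

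For condition (1), fixing $x^{(0)}$ transforms the inner sum into an average over the variables $(x_i^{(1)})_{i\in e_j}$ of $\prod_{\omega\in\Omega_j}\tilde{\nu}_N(\psi_\omega(x^{(1)})+b_\omega)$, where $\psi_\omega(x^{(1)})=\sum_{i\in e_j,\,\omega_i=1}(i-j)\,x_i^{(1)}$ and $b_\omega$ depends only on $x^{(0)}$. Distinct nonzero $\omega\in\Omega_j$ have different supports and all $i-j$ are nonzero, so the $\psi_\omega$ are pairwise non-proportional over $K$; the parameter counts $|\Omega_j|\le 2^{|e_j|}$ and $|e_j|\le|O_C\cap B_k|$ respect the cross-correlation bounds, and the $k$-cross-correlation condition yields $1+o(1)=O(1)$ uniformly in $x^{(0)}$. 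For condition (2), the same unfolding produces a product of at most $|O_C\cap B_k|\cdot 2^{|O_C\cap B_k|}$ linear forms in the $2|O_C\cap B_k|$ variables $\{x_i^{(\epsilon)}\}_{i,\epsilon}$. Within a fixed $j$, the argument of condition (1) applies; for distinct $j\neq j'$ the $(j,\omega)$-form has zero coefficient on every $x_j^{(\epsilon)}$ (as $j\notin e_j$), whereas the $(j',\omega')$-form has the nonzero coefficient $j-j'$ on $x_j^{(\omega_j')}$, so the two are not proportional. Thus the $k$-cross-correlation condition delivers $1+o(1)$.

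For condition (3), I fix the coordinates $x_{i'}^{(\epsilon)}$ with $i'\in e_j\setminus\{i\}$ and unfold to factor
$${\rm auto}(x,\tilde{\nu}_{N,j})=\prod_{\epsilon\in\{0,1\}}\frac{1}{q^{[K:{\mathbb F}_q(t)]\deg N}}\sum_{x_i^{(\epsilon)}\in L(D)/NL(D)}\prod_{\omega:\,\omega_i=\epsilon}\tilde{\nu}_N\bigl((i-j)\,x_i^{(\epsilon)}+c_\omega\bigr),$$
where each $c_\omega$ is an affine combination of the fixed coordinates with coefficients in $O_C\cap B_k$. Since every $N\in I$ is prime to $(i-j)\in(O_C\cap B_k)\setminus\{0\}$, multiplication by $i-j$ is a bijection of $L(D)/NL(D)$; after the change of variables $y=(i-j)\,x_i^{(\epsilon)}$, each inner factor is a correlation sum of the shape appearing in the $k$-auto-correlation hypothesis, hence is bounded by $\sum_{\omega<\omega',\,\omega_i=\omega_i'=\epsilon}\tilde{\tau}_N(c_\omega-c_{\omega'})$. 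Raising the product to the $M$-th power, expanding, and averaging over the remaining variables $x^{(\epsilon)}|_{e_j\setminus\{i\}}$ reduces matters to moments of $\tilde{\tau}_N$ at affine translates, and the moment hypothesis on $\tilde{\tau}_N$ controls these by $O_{M,s}(1)$.

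The main obstacle will be the bookkeeping inside condition (3): one must verify that after expanding the $M$-th power every appearance of $\tilde{\tau}_N$ takes an argument that is affine in the remaining integration variables with coefficients in $O_C\cap B_k$, so that the moment hypothesis applies uniformly in all fixed data. Beyond this the argument is a careful transfer, in the style of Green-Tao-Ziegler, from linear-form correlations assumed on $\{\tilde{\nu}_N\}$ to the hyper-graph correlations required of $\{\tilde{\nu}_{N,j}\}$.
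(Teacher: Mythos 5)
Your proposal matches the paper's proof: all three hypergraph conditions are verified by unfolding $\tilde{\nu}_{N,j}$ into $\tilde{\nu}_N$ evaluated at the same linear forms $\psi_\omega$, $\psi_{(j,\omega)}$ as in the paper, and condition (3) is handled by the same factorization of ${\rm auto}$ into the $\epsilon=0$ and $\epsilon=1$ pieces followed by the $k$-auto-correlation and moment bounds (the paper makes the decoupling explicit via a Cauchy--Schwarz that your ``expanding and averaging'' step would carry out). This is essentially the paper's argument, with you supplying some details the paper leaves implicit (non-proportionality of the forms, invertibility of $i-j$ mod $N$).
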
\begin{proof}First, we show that, for all $j\in O_C\cap B_k$, and for all $\Omega_j\subseteq\{0,1\}^{e_j}\setminus\{0\}$,
$$ \frac{1}{ q^{   |e_j|[K: {\mathbb F}_q(t)  ]\deg N}}\sum_{x^{(1)} \in ( L(D) /N L(D) )^{e_j}}\prod_{\omega \in \Omega_j} \tilde{\nu}_{N,j}(x^{(\omega)})  = O(1),$$ uniformly for all $x^{(0)}\in ( L(D) /N L(D) )^{e_j}$.
For each $\omega\in\Omega_j$, set
$$\psi_{\omega}(x^{(1)})=\sum_{i\in e_j,\omega_i=1}(i-j)x_i^{(1)},
$$
and $$b_{\omega}=\sum_{i \in e_j,\omega_i=0}(i-j)x_i^{(0)}).$$Then
$$ \frac{1}{ q^{   |e_j|[K: {\mathbb F}_q(t)  ]\deg N}}\sum_{x^{(1)} \in ( L(D) /N L(D) )^{e_j}}\prod_{\omega \in \Omega_j} \tilde{\nu}_{N,j}(x^{(\omega)})$$
$$ =\frac{1}{ q^{   |e_j|[K: {\mathbb F}_q(t)  ]\deg N}}\sum_{x^{(1)} \in ( L(D) /N L(D) )^{e_j}}\prod_{\omega \in \Omega_j} \tilde{\nu}_{N}(\psi_{\omega}(x^{(1)})+b_{\omega})  = O(1).$$

Secondly, we show that, given any choice $\Omega_j\subseteq\{0,1\}^{e_j}$ for each $j\in O_C\cap B_k$,
$$
\frac{1}{ q^{   2|O_C\cap B_k|[K: {\mathbb F}_q(t)  ]\deg N}}\sum_{x^{(0)}, x^{(1)} \in( L(D) /N L(D) )^{O_C\cap B_k}}\prod_{j\in O_C\cap B_k} \prod_{\omega \in \Omega_j} \tilde{\nu}_{N,j}(x^{(\omega)})$$$$  = 1 + o(1),\
\deg N \to \infty.$$
For each pair $(j,\omega)$ with $j\in O_C\cap B_k$ and $\omega\in\Omega_j$, set
$$  \psi_{(j,\omega)}(x)=\sum_{\stackrel{i\in O_C\cap B_k,\delta=0,1}{\omega_i=\delta}}(i-j)x_i^{(\delta)}.
$$
Then $$
\frac{1}{ q^{   2|O_C\cap B_k|[K: {\mathbb F}_q(t)  ]\deg N}}\sum_{x^{(0)}, x^{(1)} \in( L(D) /N L(D) )^{O_C\cap B_k}}\prod_{j\in O_C\cap B_k} \prod_{\omega \in \Omega_j} \tilde{\nu}_{N,j}(x^{(\omega)}) $$$$ =
\frac{1}{ q^{   2|O_C\cap B_k|[K: {\mathbb F}_q(t)  ]\deg N}}\sum_{x^{(0)}, x^{(1)} \in( L(D) /N L(D) )^{O_C\cap B_k}}\prod_{j\in O_C\cap B_k} \prod_{\omega \in \Omega_j} \tilde{\nu}_{N}(\psi_{j,\omega}(x))$$$$  = 1 + o(1).
$$

Finally we show that,  for all $j\in O_C\cap B_k$, for all $i \in e_j$, for all $\Omega_j\subseteq \{0,1\}^{e_j}$, and for all $M \in{\mathbb N}$,
$$
\frac{1}{ q^{   2(|e_j|-1)[K: {\mathbb F}_q(t)  ]\deg N}}\sum_{x^{(0)}, x^{(1)} \in ( L(D) /N L(D) )^{e_j \backslash \{i\}} }{\rm auto}(x,\tilde{\nu}_{N,j})^M= O(1).
$$
By Cauchy-Schwartz it suffices to show that, for $a=0,1$, $$
\frac{1}{ q^{   2(|e_j|-1)[K: {\mathbb F}_q(t)  ]\deg N}}\sum_{x^{(0)}, x^{(1)} \in
( L(D) /N L(D) )^{e_j \backslash \{i\}} }
{\rm auto}(x,\tilde{\nu}_{N,j},a)^{2M}= O(1),
$$
where $${\rm auto}(x,\tilde{\nu}_{N,j},a):=\frac{1}{ q^{\deg N   [K:
{\mathbb F}_q(t)  ]}}\sum_{ x_i^{(a)} \in L(D) /N L(D)
}\prod_{\omega \in \Omega_j,\omega_i=a} \tilde{\nu}_{N,j}(
x^{(\omega)} ).$$ For each $\omega\in \Omega_j$ with $\omega_i=a$,
set
$$\psi_{\omega}(x)=\sum_{l \in e_j\setminus\{i\}} (l-j)x_l^{(\omega_l)}.$$Then $$
\frac{1}{ q^{   2(|e_j|-1)[K: {\mathbb F}_q(t)  ]\deg N}}\sum_{x^{(0)}, x^{(1)} \in
( L(D) /N L(D) )^{e_j \backslash \{i\}} }
{\rm auto}(x,\tilde{\nu}_{N,j},a)^{2M}$$
$$
\leq\frac{1}{ q^{   2(|e_j|-1)[K: {\mathbb F}_q(t)  ]\deg N}}\sum_{x^{(0)}, x^{(1)}
\in ( L(D) /N L(D) )^{e_j \backslash \{i\}}
}\sum_{\stackrel{\omega,\omega'\in\Omega_j}{\omega_i=\omega'_i=a}}\tilde{\tau}^{2M}(\psi_{\omega}(x)-\psi_{\omega'}(x))$$$$
=\sum_{\stackrel{\omega,\omega'\in\Omega_j}{\omega_i=\omega'_i=a}}\frac{1}{ q^{  [K:{\mathbb
Q}]\deg N }}\sum_{x \in  L(D) /(N L(D) ) }\tilde{\tau}^{2M}(x)= O(1).$$
\end{proof}
\section{Pseudo-random measures on $L(D)$}
In this section we establish the relationship between measures on
inverse systems and measures on $L(D)$.

Let $A$ be a positive constant. For $r\in {\mathbb N}$, let
$\nu_r\ll r^A$ be a nonnegative function on $ L(D) $.
\begin{definition}The system $\{\nu_r\}$ is said
to satisfy the $k$-cross-correlation condition if, given any open
compact ${\mathbb F}_q[[1/t]]$-module $I$ in $K_{\infty}$, given any
positive integers $s\leq | O_C \cap B_k|2^{| O_C \cap B_k|},m\leq 2|
O_C \cap B_k|$, and given any mutually independent linear forms
$\psi_1,\cdots,\psi_{s}$ in $m$ variables whose coefficients are
 functions in $ O_C \cap B_{k} $, we have
$$\frac{1}{| L(D) \cap(t^{r} I)|^m}\sum_{\stackrel{x_i\in  L(D) \cap(t^{r} I)}{i=1,\cdots,m}}\prod_{j=1}^{s}\nu_r(\psi_j(x)+b_j)=1+o(1),\
r\rightarrow\infty$$ uniformly for all   functions
$b_1,\cdots,b_{s}\in
 L(D) $.
\end{definition}
\begin{definition}The system $\{\nu_r\}$
is said to satisfy the $k$-auto-correlation condition if given any
positive integers $s\leq |O_C\cap B_k|2^{| O_C \cap B_k|}$, there
exists a system $\{\tau_r\}$ of nonnegative function  on $L(D)$ such
that, given any open compact ${\mathbb F}_q[[1/t]]$-module $I$ in
$K_{\infty}$,
$$\frac{1}{|(t^{r} I)\cap L(D) |}\sum_{x\in (t^{r}I)\cap L(D)}\tau_r^M(x)=O_{M}(1),\ r\rightarrow\infty,\ \forall M\in{\mathbb N}$$
and
$$\frac{1}{|(t^{r}I)\cap L(D)|}
\sum\limits_{x\in (t^{r}I)\cap L(D)}\prod_{i=1}^s
 \nu_r(x +
y_i)\leq\sum_{1\leq i<j\leq
s}\tau_r(y_i-y_j).$$\end{definition}\begin{definition}The system
$\{\nu_r\}$ is $k$-pseudo-random if it satisfies
 the $k$-cross-correlation condition and the $k$-auto-correlation
condition.
\end{definition}
Let $\eta_1,\cdots,\eta_n$ be a ${\mathbb F}_q[t]$-basis of $ L(D)
$, and set $$G=\sum_{j=1}^n{\mathbb F}_q[[1/t]]\eta_i\subseteq
K_{\infty}.$$ From on on we assume that, for each $N\in I$,
$$\tilde{\nu}_{N}(x+NL(D))=\nu_{\deg N}(x)\text{ if }x\in t^{\deg N}G.
$$We now prove the following.
\begin{theorem}\label{pseudorandomrelation}If the system $\{\nu_r\}$ is $k$-pseudo-random,
then the system $\{\tilde{\nu}_N\}_{N\in I}$ is also
$k$-pseudo-random.\end{theorem}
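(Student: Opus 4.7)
The strategy is to transfer each correlation condition for $\tilde{\nu}_N$ on the finite quotient $L(D)/NL(D)$ to the corresponding condition for $\nu_{\deg N}$ on $L(D)$ itself, by using the fundamental domain $\mathcal{F}_N := L(D) \cap t^{\deg N} G$, on which the defining relation $\tilde{\nu}_N(z + NL(D)) = \nu_{\deg N}(z)$ directly applies.

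First I would check that $\mathcal{F}_N$ is a complete system of representatives for $L(D)/NL(D)$. Expanding $g = \sum_i f_i \eta_i$ in the ${\mathbb F}_q[t]$-basis $\{\eta_i\}$, membership in $t^{\deg N}G$ is equivalent to a degree bound on $f_i$ matching the natural representatives of ${\mathbb F}_q[t]/(N)$, so $|\mathcal{F}_N| = q^{n\deg N}$ with $n = [K : {\mathbb F}_q(t)]$.

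For the $k$-cross-correlation condition, let $\psi_1,\dots,\psi_s$ be mutually independent linear forms in $m\leq 2|O_C\cap B_k|$ variables with coefficients in $O_C\cap B_k$, and let $b_1,\dots,b_s \in L(D)/NL(D)$ with lifts $\tilde{b}_j \in \mathcal{F}_N$. Passing from sums over $L(D)/NL(D)$ to sums over $\mathcal{F}_N$ and writing out $\tilde{\nu}_N$ via $\nu_{\deg N}$, the correlation sum in question becomes
$$\frac{1}{|\mathcal{F}_N|^m}\sum_{x\in\mathcal{F}_N^m}\prod_{j=1}^s \nu_{\deg N}(\psi_j(x)+\tilde{b}_j),$$
provided each argument lies in $t^{\deg N}G$. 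Since the coefficients of $\psi_j$ lie in $O_C\cap B_k$ and the inputs lie in $t^{\deg N}G$, there is an open compact ${\mathbb F}_q[[1/t]]$-module $I\supseteq G$, depending only on $k$, such that $\psi_j(x)+\tilde{b}_j \in t^{\deg N}I$ throughout the sum. The $k$-cross-correlation hypothesis for $\{\nu_r\}$, applied with this $I$ and $r = \deg N$ (and uniformly in the shifts $\tilde b_j$), then delivers $1+o(1)$.

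For the $k$-auto-correlation condition, define $\tilde{\tau}_N(z+NL(D)) := \tau_{\deg N}(z)$ for $z\in t^{\deg N}G$, where $\{\tau_r\}$ is the system supplied by the hypothesis on $\{\nu_r\}$. The moment bound for $\tilde{\tau}_N$ is immediate from the moment bound for $\tau_r$ upon summing over $\mathcal{F}_N$ with $I = G$, $r = \deg N$. The pointwise inequality bounding $\sum_x \prod_i \tilde{\nu}_N(x+y_i)$ is obtained the same way: lift the $y_i$ to $\mathcal{F}_N$, rewrite the sum as one over $\mathcal{F}_N$, and apply the $\nu_r$-auto-correlation inequality with $I = G$; the resulting bound, $\sum_{i<j}\tau_{\deg N}(\tilde y_i - \tilde y_j)$, descends to $\sum_{i<j}\tilde{\tau}_N(y_i-y_j)$ under the quotient.

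The main technical obstacle is the interaction between reduction modulo $NL(D)$ and the defining identification of $\tilde{\nu}_N$ with $\nu_{\deg N}$: once a linear form $\psi_j$ has coefficients with nontrivial $\infty$-order content, $\psi_j(x)+\tilde{b}_j$ can escape $\mathcal{F}_N$, so equating $\tilde{\nu}_N$ at the reduced coset with $\nu_{\deg N}$ at the unreduced value is not automatic. The resolution is precisely that the $k$-cross- and $k$-auto-correlation axioms for $\{\nu_r\}$ are stated with the freedom to use \emph{any} open compact ${\mathbb F}_q[[1/t]]$-module $I$ in $K_{\infty}$; this flexibility absorbs the bounded expansion caused by the coefficients in $O_C\cap B_k$ and identifies the two correlation sums up to $1+o(1)$.
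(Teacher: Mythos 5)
Your treatment of the auto-correlation transfer is essentially the paper's, and it works: since $t^{\deg N}G$ is an additive group, $x + \tilde y_i$ and $\tilde y_i - \tilde y_j$ stay in the fundamental domain, so the identity $\tilde\nu_N(z+NL(D))=\nu_{\deg N}(z)$ applies to every term, and the moment bound transfers with $I=G$, $r=\deg N$.

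The cross-correlation transfer, however, has a genuine gap, and it is exactly at the point you yourself flag. The defining relation $\tilde\nu_N(z+NL(D))=\nu_{\deg N}(z)$ holds \emph{only} for $z\in t^{\deg N}G$; for a general coset it means $\tilde\nu_N(c)=\nu_{\deg N}(z_c)$, where $z_c$ is the unique representative of $c$ in $t^{\deg N}G$. When the coefficients of $\psi_j$ have positive degree in the $\{\eta_i\}$-basis, $\psi_j(x)+\tilde b_j$ leaves $t^{\deg N}G$ for $x$ near the top of the fundamental domain, so $\tilde\nu_N(\psi_j(x)+b_j)=\nu_{\deg N}(z)$ for the reduced representative $z\in t^{\deg N}G$, which differs from $\psi_j(x)+\tilde b_j$ by a nonzero multiple of $NL(D)$. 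Since $\nu_{\deg N}$ is an honest function on $L(D)$ and not $NL(D)$-periodic, $\nu_{\deg N}(z)\neq\nu_{\deg N}(\psi_j(x)+\tilde b_j)$ in general, and your display $\frac{1}{|\mathcal F_N|^m}\sum_{x\in\mathcal F_N^m}\prod_j\nu_{\deg N}(\psi_j(x)+\tilde b_j)$ is simply not the quantity you need. Enlarging $I$ in the hypothesis does not help: that freedom only changes the range of summation inside the $\nu_r$-axiom, it does not change the rule assigning a representative to a coset in the definition of $\tilde\nu_N$. The paper's resolution is different and more structural: after reducing (by the standard telescoping over $S'\subseteq\{1,\dots,s\}$) to bounding $\sum\prod_{j\in S'}(\tilde\nu_N(\psi_j(x)+b_j)-1)$, it splits each variable $x_i\in t^{\deg N}G\cap L(D)$ as $x_i'+t^{\deg N-c}y_i$ with $x_i'\in t^{\deg N-c}G\cap L(D)$ and $y_i\in t^cG\cap L(D)$, where $c$ is the maximal degree of the $\psi_j$-coefficients in the $\{\eta_i\}$-basis; then $\psi_j(x')\in t^{\deg N}G$, the leftover $t^{\deg N-c}\psi_j(y)+b_j$ is replaced by its representative $b_j'\in t^{\deg N}G$, and closure of $t^{\deg N}G$ under addition ensures $\psi_j(x')+b_j'\in t^{\deg N}G$, where the identification with $\nu_{\deg N}$ is legitimate. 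If you want to salvage your write-up, you need this restriction of the summation range (or an equivalent device), not a larger module $I$.
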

\begin{proof}
First we show that, given any positive integers $s\leq | O_C \cap B_k|2^{| O_C \cap B_k|},m\leq
2| O_C \cap B_k|$, and given any mutually independent linear forms
$\psi_1,\cdots,\psi_{s}$ in $m$ variables whose coefficients are
  functions in $ O_C \cap  B_{k} $,
$$\frac{1}{ q^{ m [K:{\mathbb F}_q(t)]\deg N }}\sum_{\stackrel{x_i\in  L(D) /(N L(D) )}{i=1,\cdots,m}}
\prod_{j=1}^{s}\tilde{\nu}_N(\psi_j(x)+b_j)=1+o(1),\
\deg N\rightarrow\infty$$ uniformly for all $b_1,\cdots,b_{s}\in L(D)/(N L(D) )$.
It suffices to show that for any $S'\subset \{1,\cdots,s\}$,
$$\frac{1}{ q^{m [K:{\mathbb F}_q(t)]\deg N }}\sum_{\stackrel{x_i\in t^{\deg N}G\cap L(D)}{i=1,\cdots,m}}
\prod_{j\in S'}(\tilde{\nu}_N(\psi_j(x)+b_j)-1)=o(1),\
N\rightarrow\infty$$ uniformly for all $b_1,\cdots,b_{s}\in L(D)$. Let $c$ be the maximal degree of the coefficients of the matrix of
$\psi$ with respect to $\{\eta_i\}$. Then
$$\frac{1}{ q^{m [K:{\mathbb F}_q(t)]\deg N }}\sum_{\stackrel{x_i\in t^{\deg N}G\cap L(D)}{i=1,\cdots,m}}
\prod_{j\in S'}(\tilde{\nu}_N(\psi_j(x)+b_j)-1)$$$$=\frac{1}{ q^{m
[K:{\mathbb F}_q(t)]\deg N }} \sum_{\stackrel{y_i\in t^{c}G\cap
L(D)}{i=1,\cdots,m}}\sum_{\stackrel{x_i\in t^{\deg N-c}G\cap
L(D)}{i=1,\cdots,m}} \prod_{j\in S'}(\tilde{\nu}_N(\psi_j(x)+t^{\deg
N-c}\psi_j(y)+b_j)-1)$$$$=\frac{1}{ q^{m [K:{\mathbb F}_q(t)]\deg N
}} \sum_{\stackrel{y_i\in t^{c}G\cap
L(D)}{i=1,\cdots,m}}\sum_{\stackrel{x_i\in t^{\deg N-c}G\cap
L(D)}{i=1,\cdots,m}} \prod_{j\in S'}( \nu_{\deg N}
(\psi_j(x)+b'_j)-1)=o(1),$$ where $b'_j\equiv t^{\deg
N-c}\psi_j(y)+b_j({\rm mod} NL(D))$ lies in $t^{\deg N}G$.

Secondly we show that,
 given any positive integers $s\leq | O_C \cap
B_k|2^{| O_C \cap B_k|}$,
$$\frac{1}{ q^{\deg N  [K:{\mathbb F}_q(t)] }}
\sum\limits_{x\in  L(D) /(N L(D) )}\prod_{i=1}^s
 \tilde{\nu}_N(x +
y_i)\leq\sum_{1\leq i<j\leq s}\tilde{\tau}(y_i-y_j),$$ where
$$\tilde{\tau}_N(x+NL(D))=\tau_{\deg N}(x)\text{ if }x\in t^{\deg N}G.$$
In fact, we have
 $$ \frac{1}{ q^{\deg N  [K:{\mathbb F}_q(t)] }}
\sum\limits_{ x \in  L(D) /(N L(D) )}\prod_{i=1}^{s} \tilde{\nu}_N(x
+ y_i)$$
$$=\frac{1}{ q^{\deg N  [K:{\mathbb F}_q(t)] }} \sum\limits_{ x \in t^{\deg N}G}\prod_{i=1}^{s} \nu_{\deg N} (x + y'_i)$$$$\leq
\sum_{1\leq i<j\leq s}\tau_{\deg N}(y'_i-y'_j)$$$$=\sum_{1\leq
i<j\leq s}\tilde{\tau}_N(y_i-y_j),$$ where $y'_i\equiv y_i({\rm
mod}NL(D))$ lies in $t^{\deg N}G$. The theorem is proved.
\end{proof}

\section{The geometric relative Szemer\'edi theorem}
In this section we prove the geometric relative Szemer\'edi theorem.

For each $N\in I$, let $\tilde{A}_N$ be a subset of
$ L(D) /(N L(D) )$. \begin{definition}The upper density of $\{\tilde{A}_N\}$
relative to $\{\tilde{\nu}_N\}$ is defined to be
$$\limsup_{I\ni N\rightarrow\infty}
\frac{\sum_{x\in \tilde{A}_N}\tilde{\nu}_N(x)}{\sum_{x\in  L(D) /(N L(D) )}\tilde{\nu}_N(x)}.$$\end{definition}
The following version of the geometric relative Szemer\'edi theorem follows from a theorem of Tao in \cite{tao:gaussian}.
\begin{theorem} If the system $\{\tilde{\nu}_{N,j}\}$ is
pseudo-random, and $\{\tilde{A}_N\}$ has positive upper density
relative to $\{\tilde{\nu}_N\}$, then there is a subset
$\tilde{A}_N$ and a truncated principal residue class $A$ of $L(D)$ of size
$|O_C\cap B_k|$ such that
$$ A({\rm mod} N L(D) )\subseteq \tilde{A}_N.$$
\end{theorem}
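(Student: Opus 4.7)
The plan is to deduce this theorem from Tao's relative Szemer\'edi theorem in \cite{tao:gaussian}, applied to the inverse-system/hypergraph setup built in the previous two sections. The key dictionary is between truncated principal residue classes of $L(D)$ of size $|O_C\cap B_k|$ and affine configurations $\{a+gj\mid j\in O_C\cap B_k\}\subseteq L(D)$, where $a\in L(D)$ is a base point and $g\in K^{\times}$ is a scale such that $M=(g)$ is principal and $\mathrm{ord}_{\infty}(gj)>-r$ for every $j\in O_C\cap B_k$. Under this identification, locating such a class inside $\tilde{A}_N\pmod{NL(D)}$ amounts to finding an affine embedding of the vertex set of the hypergraph $(O_C\cap B_k,\{e_j\})$ into $\tilde{A}_N$, weighted against the measures $\tilde{\nu}_{N,j}$.

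First I would assemble the hypotheses. Theorem~\ref{pseudorandomrelation} promotes pseudo-randomness of $\{\nu_r\}$ on $L(D)$ to pseudo-randomness of $\{\tilde{\nu}_N\}$ on the inverse system, and Theorem~\ref{inversesys} then promotes this to pseudo-randomness of the hypergraph system $\{\tilde{\nu}_{N,j}\}$, placing us in exactly the hypothesis of Tao's theorem. Next I would observe that the linear forms $\sum_{i\in e_j}(i-j)x_i$ used in the construction of $\tilde{\nu}_{N,j}$ are precisely those whose simultaneous vanishing carves out affine simplices of shape $O_C\cap B_k$; thus a nontrivial count of such simplices lying in $\tilde{A}_N$ (as measured by the weighted hypergraph counting operator that appears in Tao's proof) produces the required configuration. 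The positive relative upper density of $\{\tilde{A}_N\}$ with respect to $\{\tilde{\nu}_N\}$ is exactly the input Tao needs.

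Tao's relative Szemer\'edi theorem then supplies the transference principle: any subset of positive relative density with respect to a pseudo-random hypergraph measure contains, for infinitely many $N\in I$, at least one nondegenerate simplex. Under the dictionary above this is the desired truncated principal class $A$, and we are done.

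The main obstacle will be careful bookkeeping: aligning our function-field inverse-limit setup with the finite-abelian-group formulation in \cite{tao:gaussian}, verifying that the substitution $x\mapsto\sum_{i\in e_j}(i-j)x_i$ really encodes affine copies of $O_C\cap B_k$ (so that a simplex produced by the black-box theorem does correspond to a truncated principal class, rather than a degenerate or partially collapsed one), and confirming that distinct vertices $i,j\in O_C\cap B_k$ remain distinct after reduction modulo $NL(D)$ for $N\in I$. The last point is precisely the reason $I$ was defined to consist of polynomials coprime to every nonzero element of $O_C\cap B_k$: this coprimality rules out accidental collisions after reduction and preserves the nondegeneracy needed for the simplex to be a truncated class of full size $|O_C\cap B_k|$.
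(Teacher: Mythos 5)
Your approach matches the paper's: the theorem is obtained by a direct appeal to Tao's relative hypergraph Szemer\'edi theorem in \cite{tao:gaussian}, with the affine simplices of shape $O_C\cap B_k$ cut out by the linear forms $\sum_{i\in e_j}(i-j)x_i$ playing the role of the configurations, and the coprimality condition defining $I$ guaranteeing nondegeneracy modulo $NL(D)$. One small misstep: your opening step invoking Theorems~\ref{pseudorandomrelation} and~\ref{inversesys} to \emph{establish} pseudo-randomness of $\{\tilde{\nu}_{N,j}\}$ is out of place here, since that is precisely the hypothesis of the present theorem; that chain of deductions is what yields the subsequent Theorems~\ref{rsinversesys} and~\ref{rs}, not this one.
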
The above theorem, along with Theorem \ref{inversesys}, implies the following.
\begin{theorem}\label{rsinversesys} If the system $\{\tilde{\nu}_{N}\}$ is
$k$-pseudo-random, and $\{\tilde{A}_N\}$ has positive upper density
relative to $\{\tilde{\nu}_N\}$, then there is a subset
$\tilde{A}_N$ and a truncated residue class $A$ of $L(D)$ of size
$|O_C\cap B_k|$ such that
$$ A({\rm mod} N L(D) )\subseteq \tilde{A}_N.$$
\end{theorem}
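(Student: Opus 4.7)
The plan is to derive Theorem \ref{rsinversesys} by chaining the two results already available in the paper: Theorem \ref{inversesys} and the preceding theorem of this section, which is the hyper-graph relative Szemer\'edi theorem imported from \cite{tao:gaussian}.

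First, I would apply Theorem \ref{inversesys} to the hypothesis. Since $\{\tilde{\nu}_N\}$ is assumed $k$-pseudo-random on the inverse system $\{L(D)/(NL(D))\}_{N\in I}$, that theorem immediately hands me pseudo-randomness of the derived hyper-graph system $\{\tilde{\nu}_{N,j}\}_{N\in I,\,j\in O_C\cap B_k}$, where $\tilde{\nu}_{N,j}(x)=\tilde{\nu}_N\bigl(\sum_{i\in e_j}(i-j)x_i\bigr)$. Nothing new has to be checked here; the verification of the three axioms in the definition of pseudo-randomness on the hyper-graph was exactly what Theorem \ref{inversesys} took care of.

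Next, I would feed this into the preceding theorem. The upper-density hypothesis on $\{\tilde{A}_N\}$ is phrased with respect to $\tilde{\nu}_N$ and is therefore already in the correct form for that theorem. Its conclusion produces a truncated principal residue class $A\subseteq L(D)$ of size $|O_C\cap B_k|$ with $A\bmod NL(D)\subseteq\tilde{A}_N$ for some $N$. Because every truncated principal residue class is, by the definition in the introduction, also a truncated residue class, this is exactly the conclusion of Theorem \ref{rsinversesys}.

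The substantive work has therefore already been done upstream: the pseudo-randomness transfer in Theorem \ref{inversesys} and Tao's relative Szemer\'edi theorem on the hyper-graph side. The only point where I would expect any friction is confirming that the principal-versus-general qualifier in the two statements is compatible; once one notes that the target conclusion is the weaker of the two (any principal class is a class in the sense of the original definition), the deduction is immediate and no further calculation is required.
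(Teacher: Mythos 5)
Your proposal is exactly the paper's own argument: the paper simply states that Theorem \ref{inversesys} together with the preceding hyper-graph relative Szemer\'edi theorem (imported from \cite{tao:gaussian}) implies Theorem \ref{rsinversesys}, and your chaining of the two, plus the observation that a truncated principal residue class is a fortiori a truncated residue class, is the same deduction.
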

\begin{definition}For $r\in {\mathbb N}$,
let $A_r$ be a subset of $ L(D) \cap B_r$. The upper density of
$\{A_r\}$ relative to $\{\nu_r\}$ is defined to be
$$\limsup_{r\rightarrow\infty}\frac{\sum_{x\in A_r} \nu_{r} (g)}{\sum_{x\in  L(D) \cap B_r} \nu_{r} (x)}.$$\end{definition}
We now prove the following.
\begin{theorem}\label{rs}If $\{ \nu_{r} \}$ is
$k$-pseudo-random, $c$ is a sufficiently large positive constant
depending only on $k$, $C$ and $D$, and $\{A_{r}\cap B_{r-c}\}$ has
positive upper density relative to $\{ \nu_{r} \}$, then there is a
subset $A_r$ that contains a truncated residue class of $L(D)$ of
size $|O_C\cap B_k|$.
\end{theorem}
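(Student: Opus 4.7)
The plan is to transfer the hypothesis on $L(D)$ to the inverse-system setting of Section~2 and then apply Theorem~\ref{rsinversesys}. Specifically, for each $N\in I$ I would set $r=\deg N$, lift $\nu_r$ to $\tilde{\nu}_N$ on $L(D)/(NL(D))$ via the prescribed rule $\tilde{\nu}_N(x+NL(D))=\nu_{\deg N}(x)$ for $x\in t^{\deg N}G$, and lift $A_r\cap B_{r-c}$ to $\tilde{A}_N\subseteq L(D)/(NL(D))$ by reduction modulo $NL(D)$. Theorem~\ref{pseudorandomrelation} then supplies the $k$-pseudo-randomness of $\{\tilde{\nu}_N\}$ with no extra work.

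The first technical step is to show that for $c$ large enough, depending only on $k$, $C$ and $D$, the reduction map $L(D)\cap B_{r-c}\to L(D)/(NL(D))$ is injective whenever $\deg N=r$. The point is that every nonzero $h\in L(D)$ has ${\rm ord}_{\infty}(h)$ bounded above by a constant depending only on $C$ and $D$, so every nonzero $Nh\in NL(D)$ has pole order at infinity at least $\deg N-O_{C,D}(1)=r-O_{C,D}(1)$; once $c$ exceeds that constant, $Nh\notin B_{r-c}-B_{r-c}$. Injectivity gives $|\tilde{A}_N|=|A_r\cap B_{r-c}|$, and matching the definitions of upper density shows that $\{\tilde{A}_N\}$ has positive upper density with respect to $\{\tilde{\nu}_N\}$ whenever $\{A_r\cap B_{r-c}\}$ does with respect to $\{\nu_r\}$.

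Theorem~\ref{rsinversesys} now produces a truncated residue class $A\subseteq L(D)$ of size $|O_C\cap B_k|$ whose image in $L(D)/(NL(D))$ is contained in $\tilde{A}_N$. The crucial step is to promote this to the honest inclusion $A\subseteq A_r$. By the definition of a truncated residue class, every $f\in A$ satisfies $f-a\in L(M)$ and ${\rm ord}_{\infty}(f-a)>-r'$ for some $a\in L(D)$, $M\le D$, and $r'>0$; after shifting $a$ by an element of $NL(D)$ and using that the parameters of $A$ coming out of Theorem~\ref{rsinversesys} depend only on $k$, $C$ and $D$, one can arrange $A\subseteq B_{r-c}$. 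Combined with the injectivity of reduction on $L(D)\cap B_{r-c}$, this forces each $f\in A$ to coincide with its unique representative in $A_r\cap B_{r-c}$, whence $A\subseteq A_r$.

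The main obstacle is the bookkeeping in this last step — quantifying how large $c$ must be so that the truncated class supplied by Theorem~\ref{rsinversesys} can be normalized into $B_{r-c}$, and ensuring the choice of $c$ is uniform in $r$ and $N$. All the required dependencies fall on $\deg D$, on the number and ramification of the places of $\hat{C}$ at infinity, and on $k$, which are precisely the allowed dependencies in the statement.
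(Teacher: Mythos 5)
Your proposal follows essentially the same route as the paper: lift the weight $\nu_r$ and the set $A_r\cap B_{r-c}$ from $L(D)$ to $L(D)/(NL(D))$ using the fundamental domain $t^{\deg N}G$, invoke Theorem~\ref{pseudorandomrelation} for pseudo-randomness of $\{\tilde\nu_N\}$ and Theorem~\ref{rsinversesys} for the relative Szemer\'edi conclusion, and then translate the resulting truncated class back into $B_{r-c}$. Your injectivity lemma for the reduction map on $L(D)\cap B_{r-c}$ makes explicit a fact the paper uses implicitly (in the $o(1)$ chain of equalities and in the ``replace $A$ by a translation'' step), and the remaining ``bookkeeping'' you flag at the end is exactly the step the paper also compresses into a single sentence.
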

\proof We have
$$\frac{\sum_{x\in A_{\deg N}\cap
B_{\deg N-k}}\tilde{\nu}_N(x)}{\sum_{x\in  L(D)
/(NL(D))}\tilde{\nu}_N(x)}=\frac{1}{ q^{ [K:{\mathbb F}_q(t)]\deg N
}}\sum_{x\in A_{\deg N}\cap B_{\deg
N-c}}\tilde{\nu}_N(x)+o(1)$$$$=\frac{1}{ q^{  [K:{\mathbb
F}_q(t)]\deg N }}\sum_{x\in A_{\deg N}\cap B_{\deg N-c}} \nu_{\deg
N} (x)+o(1)=\frac{\sum_{x\in A_{\deg N}\cap B_{ \deg N-k}} \nu_{\deg
N} (g)}{\sum_{x\in  L(D) \cap B_{\deg N}} \nu_{\deg N} (x)}+o(1).$$
So $\{A_{\deg N}\cap B_{\deg N-c}({\rm mod} N L(D) )\}$
 has positive upper density
relative to $\{\tilde{\nu}_N\}$. By Theorem \ref{rsinversesys},
there is a subset $A_{\deg N}\cap B_{\deg N-c}({\rm mod} NL(D))$ and
a truncated principal residue class $A$ of $L(D)$ of size $|O_C\cap
B_k|$ such that
$$A({\rm mod} N L(D) )\subseteq A_{\deg N}\cap B_{\deg N-c}({\rm mod} NL(D)).$$
Replace  $A$ by a translation if necessary, we
conclude that
$$A\subseteq A_{\deg N}\cap B_{\deg N-c}.$$ The theorem follows.
\endproof

\section{The cross-correlation of the truncated von Mangolt function} In this section
we shall establish the cross-correlation of the truncated von
Mangolt function.

The truncated von Mangolt function for the rational number field was
introduced by Heath-Brown \cite{Heath-Brown}. The truncated von
Mangolt function for the Gaussian number field was introduced by Tao
\cite{tao:gaussian}. The cross-correlation of the truncated von
Mangolt function for the rational number field were studied by
Goldston-Y{\i}ld{\i}r{\i}m in \cite{gy1-cor, gy2-cor, gy3-cor}, and
by Green-Tao in \cite{gt-ap, gt-lattice}. The cross-correlation of
the truncated von Mangolt function for the rational function field
were studied by \cite{thai}.

Let $\varphi: {\mathbb R} \to {\mathbb R}^+$ be  a smooth bump
function supported on $[-1,1]$ which equals 1 at 0, and let $R>1$ be
a parameter.
 We now define the
truncated von Mangoldt function for the function field
$K$.\begin{definition}We define the truncated \emph{von Mangoldt
function} $\Lambda_{K,R}$ of $K$ by the formula
$$
\Lambda_{K,R}(D) := \sum_{M\leq D}\mu_K(M)\varphi (\frac{\deg
M}{R}),\ \forall D\geq0,$$ where $\mu_K$ is the \emph{M\"obius
function} of $K$ defined by the formula
$$\mu_K(D)=\left\{
                      \begin{array}{ll}
                        (-1)^k, & \hbox{} D\text{ is a sum of }k\text{ distinct prime divisors},\\
                        0, & \hbox{otherwise.}
                      \end{array}
                    \right.
$$
\end{definition}
Note that $\Lambda_{K,R}(D)=1$ if $D$ is a prime divisor of degree
$\geq R$.

Let $\zeta_K(z)$ be the zeta function of $K$ defined by the formula
$$\zeta_{K}(z)=\prod_{P}\frac{1}{1-q^{-z\deg P}},\ \Re z>1,$$where
$P$ runs through the set of closed points on $C$. Write $$
\hat{\varphi}(x)= \int_{-\infty}^\infty e^t \varphi(t) e^{ixt}\
dt,$$and
$$ c_{\varphi} :=\int_{-\infty}^{+\infty}\int_{-\infty}^{+\infty} \frac{(1+iy)(1+iy')}{ (2+iy+iy')}\hat{\varphi}(y) \hat{\varphi}(y') dy
dy'.
$$
From now on, for each $r\in {\mathbb N}$, let $$ \nu_{r}
(x)=\frac{\phi_K(W) R\cdot{\rm Res}_{z=1} \zeta_K(z)}{ c_{\varphi}
q^{\deg W [K:{\mathbb F}_q(t)]}}\Lambda_{K,R}^2((Wx+\alpha) L(D)
^{-1}).$$ Here $$R=\frac{r}{8| O_C \cap B_k|2^{| O_C \cap
 B_k|}},$$ $W$ is the
product of monic irreducible polynomials of degree $\leq w:=\log\log
r$,

$\phi_K(W):=| O_C /(W))^{\times}|$, and
 $\alpha$ a number prime to $W$.

 We now prove the following.
\begin{theorem}\label{crosscorrelation}The system $\{ \nu_{r} \}$ satisfies the $k$-cross-correlation condition.
\end{theorem}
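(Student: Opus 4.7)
The plan is to reduce the claimed linear-forms identity for $\nu_r$ to a Goldston--Y{\i}ld{\i}r{\i}m-style divisor sum, in the spirit of Green--Tao and Thai but now on a general curve $C$ rather than on $\mathbb{P}^1$.

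First I would unfold the square $\Lambda_{K,R}^2$ in the definition of $\nu_r$. Using
\[
\Lambda_{K,R}(E)=\sum_{M\leq E}\mu_K(M)\,\varphi(\deg M/R),
\]
each factor $\nu_r(\psi_j(x)+b_j)$ expands as a double divisor sum over pairs $(M_j,M'_j)$ of effective divisors on $C$. Swapping the $x$-average inside, the left-hand side of the desired identity becomes a normalization constant times
\[
\sum_{(M_j,M'_j)_{j=1}^{s}}\Big(\prod_{j=1}^{s}\mu_K(M_j)\mu_K(M'_j)\,\varphi(\tfrac{\deg M_j}{R})\,\varphi(\tfrac{\deg M'_j}{R})\Big)\,\rho\bigl((M_j,M'_j)_j\bigr),
\]
where $\rho$ denotes the density of $x\in(L(D)\cap(t^r I))^m$ simultaneously satisfying $[M_j,M'_j]\mid(W\psi_j(x)+Wb_j+\alpha)+D$ for every $j$.

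Next I would evaluate $\rho$. Because the linear forms $\psi_j$ are mutually independent and their coefficients lie in the finite set $O_C\cap B_k$, for $r$ large enough the reduction $L(D)\cap(t^r I)\to L(D)/NL(D)$ is a uniform surjection for every relevant modulus $N$; a Chinese-remainder argument then factorizes $\rho$ as an Euler product $\prod_P\omega_P$ over the finitely many prime divisors $P$ in the support of $\prod_j[M_j,M'_j]$. At primes $P\mid W$ the coprimality of $\alpha$ with $W$ kills the contribution unless every $[M_j,M'_j]$ is prime to $W$; at the other primes $\omega_P$ is the usual product of $N(P)^{-1}$ factors corresponding to each divisibility constraint.

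The residual task is to estimate the resulting multiple Dirichlet sum. Substituting the Mellin representation $\varphi(\deg M/R)=\frac{1}{2\pi}\int\hat\varphi(y)\,N(M)^{-(1+iy)/(R\log q)}\,dy$ converts the inner sum into a $2s$-fold contour integral whose leading Euler factor is $\prod_{j}\zeta_K\bigl(1+\tfrac{2+iy_j+iy'_j}{R\log q}\bigr)$, multiplied by an Euler product which is holomorphic and uniformly bounded on $\Re z\geq 1-\delta$ for some $\delta>0$. Extracting the residue of $\zeta_K$ at $z=1$ in each of the $s$ pairs delivers a factor $(R\cdot\operatorname{Res}_{z=1}\zeta_K(z))^s$, the remaining $(y,y')$-integrals collapse to $c_\varphi^{\,s}$ by the very definition of $c_\varphi$, and the $W$-local block contributes $(\phi_K(W)/q^{\deg W[K:\mathbb{F}_q(t)]})^s$. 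These three quantities match exactly the $s$-th power of the normalization factor in $\nu_r$, so the main term simplifies to $1$; shifting contours slightly to the left of $\Re z=1$ gives a power-saving error $O(R^{-c})=o(1)$, which is essentially elementary here because $\zeta_K(z)$ is a rational function in $q^{-z}$ by Weil. The main obstacle I expect is the Euler-product bookkeeping: one must absorb the finitely many primes in the support of $D$, the points at infinity, and any ramified primes into the bounded correction factor, and show that the dependence of $\omega_P$ on the shifts $b_j$ (only at the small primes in $\prod_j[M_j,M'_j]$) can be controlled absolutely, so that uniformity in $b_1,\ldots,b_s$ is preserved throughout the local-global decomposition.
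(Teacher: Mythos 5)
Your proposal follows essentially the same route as the paper: expand $\Lambda_{K,R}^2$ into a divisor sum with local densities, factor them by the Chinese remainder theorem into an Euler product (with the $W$-local block and coprimality of $\alpha$ handled exactly as in the paper), insert the Fourier/Mellin representation of $\varphi$, and extract the main term from the pole of $\zeta_K$ at $z=1$ so that the residue, the factor $\phi_K(W)/q^{\deg W[K:{\mathbb F}_q(t)]}$, and the constant $c_\varphi$ cancel the normalization in $\nu_r$. The only difference is cosmetic: you control the error by a slight contour shift using the rationality of $\zeta_K$, whereas the paper truncates the $\hat\varphi$-integral at $\pm\sqrt{R}$ and bounds the tail with the crude estimate $\ll R^{O_s(1)}$ on the absolute divisor sum, which amounts to the same thing.
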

\proof Given any open compact ${\mathbb F}_q[[1/t]]$-module $I$ in
$K_{\infty}$, given any positive integers $s\leq | O_C \cap B_k|2^{|
O_C \cap B_k|},m\leq 2| O_C \cap B_k|$, and given any mutually
independent linear forms $\psi_1,\cdots,\psi_{s}$ in $m$ variables
whose coefficients are
 functions in $ O_C \cap B_{k} $, we show that
$$\frac{1}{| L(D) \cap(t^{r} I)|^m}\sum_{\stackrel{x_i\in  L(D) \cap(t^{r} I)}{i=1,\cdots,m}}\prod_{j=1}^{s}
\nu_{r} (\psi_j(x)+b_j)=1+o(1),\ r\rightarrow\infty$$ uniformly for
all   functions $b_1,\cdots,b_{s}\in
 L(D) $.

We have
$$\frac{1}{| L(D) \cap(t^{r} I)|^m}(\frac{ q^{ [K :{\mathbb F}_q(t)]\deg W
}}{\phi_K(W) R\cdot{\rm
Res}_{z=1}\zeta_K(z)})^{s}\sum_{\stackrel{x_i\in  L(D) \cap(t^{r} I)}{i=1,\cdots,m}}\prod_{j=1}^{s}
\nu_{r} (\psi_j(x)+b_j)$$$$=\sum_{{\frak d},{\frak
d}'}\omega(({\frak d}_i\cap{\frak d}_i')_{1\leq i\leq
s})\prod_{i=1}^s\mu_{K}({\frak d}_i)\mu_{K}({\frak
d}_i')\varphi(\frac{ \deg  {\frak d}_i}{ R})\varphi(\frac{ \deg
{\frak d}'_i}{ R }),$$ where ${\frak d}$ and ${\frak d}'$ run over
$s$-tuples of effective divisors on $C $, and
$$\omega(({\frak d}_i)_{1\leq i\leq s})
=\frac{|\{x\in( L(D) /( L(D-{\frak d})^m:{\frak d}_i\leq (W\psi_i(x)
+ b'_i)+D,\forall i=1,\cdots,s\}|}{({\rm N}{\frak d})^{m}}$$ with
${\frak d}={\rm l.c.m.}({\frak d}_1,\cdots,{\frak d}_s)$ and
$b'_i=Wb_i+\alpha$.

Define
$$F(t,t')=\sum_{{\frak d},{\frak d}'}\omega(({\frak
d}_j\cap{\frak d}'_j)_{1\leq j\leq
s})\prod_{j=1}^s\frac{\mu_{K}({\frak d}_j)\mu_{K}({\frak d}'_j)}
{\N({\frak d}_j)^{\frac{1+it_j}{ R }}\N({\frak
d}'_j)^{\frac{1+it'_j}{ R }}},\ t,t'\in{\mathbb R}^s,$$where ${\frak
d}$ and ${\frak d}'$ run over $s$-tuples of divisors on $ C .$

 It is easy to see that, for all $B >
0$,
$$
e^x\varphi(x) = \int_{-\sqrt{ R }}^{\sqrt{ R }} \hat{\varphi}(t)
e^{-ixt}\ dt+O( R ^{-B}).
$$
It follows that for all $B>0$,$$\frac{1}{| L(D) \cap(t^{r} I)|^m}(\frac{ q^{ [K :{\mathbb F}_q(t)]\deg W
}}{\phi_K(W) R\cdot{\rm
Res}_{z=1}\zeta_K(z)})^{s}\sum_{\stackrel{x_i\in  L(D) \cap(t^{r} I)}{i=1,\cdots,m}}\prod_{j=1}^{s}
\nu_{r} (\psi_j(x)+b_j)$$$$= \int_{[-\sqrt{ R},\sqrt{
R }]^s}\int_{[-\sqrt{ R },\sqrt{ R }]^s}
F(t,t')\hat{\varphi}(t)\hat{\varphi}(t')dtdt'$$$$+O(
R^{-B})\cdot\sum_{{\frak d},{\frak d}'}\omega(({\frak d}_j\cap{\frak
d}'_j)_{1\leq j\leq s})\prod_{i=1}^s\frac{|\mu_{K}({\frak
d}_j)\mu_{K}({\frak d}'_j)|} {\N({\frak d}_j)^{1/ R }\N({\frak
d}'_j)^{1/ R }}.
$$
Hence we are reduced to prove the following.
$$\sum_{{\frak d},{\frak d}'}\omega(({\frak
d}_j\cap{\frak d}'_j)_{1\leq j\leq
s})\prod_{j=1}^s\frac{|\mu_{K}({\frak d}_j)\mu_{K}({\frak d}'_j)|}
{\N({\frak d}_j)^{1/ R }\N({\frak d}'_j)^{1/R}}\ll R^{O_{s}(1)},$$
and, for $t,t'\in[-\sqrt{ R},\sqrt{ R }]^s$,
$$
F(t,t')=(1 + o(1))(\frac{ q^{ [K :{\mathbb F}_q(t)]\deg W
}}{\phi_K(W) R\cdot{\rm
Res}_{z=1}\zeta_K(z)})^{s}\prod_{j=1}^s\frac{(1+it_j)(1+it'_j)}{
(2+it_j+it'_j)}.$$

We prove the equality first. Applying the Chinese remainder theorem,
one can show that
$$\omega(({\frak
d}_j)_{1\leq j\leq s})=\prod_{\wp}\omega(({\frak d}_j,\wp)_{1\leq
j\leq s}),$$ where $\wp$ runs over prime divisors on $ C $. One can
also show that
$$\omega((({\frak
d}_j,\wp))_{1\leq j\leq s})=\left\{
                     \begin{array}{ll}1, & \hbox{ } \prod_{j=1}^s({\frak
d}_j,\wp)=0,\\
0,&\hbox{}  \prod_{j=1}^s({\frak d}_j,\wp)\neq0, (W)\geq\wp.
                     \end{array}
                   \right.
$$
And, if $\wp\nmid W$ and $W$ is sufficiently large, then one can
show that
$$\omega((({\frak
d}_j,\wp))_{1\leq j\leq s})\left\{
                    \begin{array}{ll}
                      =1/{\rm N}\wp, & \hbox{} \prod_{j=1}^s({\frak d}_j,\wp)=\wp\\
                      \leq1/{\rm N}\wp^2, & \hbox{}\wp^2\mid \prod_{j=1}^s({\frak
d}_j,\wp).
                    \end{array}
                  \right.
$$
It follows that
$$
F(t,t')=\prod_{\wp}\sum_{{\frak d}_j,{\frak d}'_j\mid \wp,\forall
j=1,\cdots,s}\omega(({\frak d}_j\cap{\frak d}'_j)_{1\leq j\leq
s})\prod_{j=1}^s\frac{\mu_{K}({\frak d}_j)\mu_{K}({\frak d}'_j)}
{{\rm N}{\frak d}_j^{\frac{1+it_j}{ R }}{\rm N}{{\frak
d}'_j}^{\frac{1+it'_j}{ R }}}
$$$$ =\prod_{\wp\nmid W}(1 + \sum_{j=1}^s-{\rm
N}\wp^{-1-\frac{1+it_j}{ R }} -{\rm N}\wp^{-1-\frac{1+it'_j}{\log
R}}+ {\rm N}\wp^{-1-\frac{2+it_j+it'_j}{ R }}+ O_{s}(\frac{1}{{\rm
N}\wp^2}))$$$$ =\prod_{p\nmid W}(1 + O_{s}(\frac{1}{q^{2\deg p}}))
\prod_{j=1}^s\prod_{\wp\nmid W}\frac{(1 -{\rm
N}\wp^{-1-\frac{1+it_j}{ R }})(1 -{\rm N}\wp^{-1-\frac{1+it'_j}{ R
}})}{ (1-{\rm N}\wp^{-1-\frac{2+it_j+it'_j}{ R }})}$$
$$ =(1+O(\frac{1}{ R })) \prod_{j=1}^s \frac{
\zeta_K(1+\frac{2+it_s+it'_s}{ R })}{\zeta_K(1+\frac{1+it_s}{
R})\zeta_K(1+\frac{1+it'_j}{ R })}\prod_{\wp\mid W}\frac{ (1-{\rm
N}\wp^{-1-\frac{2+it_j+it'_j}{ R }})}{(1 -{\rm
N}\wp^{-1-\frac{1+it_j}{ R }})(1 -{\rm N}\wp^{-1-\frac{1+it'_j}{ R
}})}.$$ From the estimate
$$\zeta_K(z)=\frac{{\rm Res}_{z=1}\zeta_K(z)}{z-1}+O(1), \ z\to 1,$$
and the estimate $$e^z=1+O(z),\ z\to 0,$$ we infer that
$$ F(t,t')
=(1 + O(\frac {1}{ R }))\cdot\prod_{\wp\mid
W}(1+O(\frac{\deg\wp}{\deg\wp  R^{1/2}}))\cdot$$$$(\frac{ q^{ [K
:{\mathbb F}_q(t)]\deg W }}{\phi_K(W) R \cdot{\rm
Res}_{z=1}\zeta_K(z)})^{s}\prod_{j=1}^s\frac{(1+it_j)(1+it'_j)}{
(2+it_j+it'_j)}.$$ Applying the estimate
$$\prod_{\wp\mid W}(1+\frac{\log {\rm
N}\wp}{{\rm N}\wp})=O(e^{\log^2w}),$$ we arrive at
$$ F(t,t')
=(1 + o(1))(\frac{ q^{ [K :{\mathbb F}_q(t)]\deg W }}{\phi_K(W)
R\cdot{\rm
Res}_{z=1}\zeta_K(z)})^{s}\prod_{j=1}^s\frac{(1+it_j)(1+it'_j)}{
(2+it_j+it'_j)}$$ as required.

We now turn to prove the estimate $$\sum_{{\frak d},{\frak
d}'}\omega(({\frak d}_j\cap{\frak d}'_j)_{1\leq j\leq
s})\prod_{j=1}^s\frac{|\mu_{K}({\frak d}_j)\mu_{K}({\frak d}'_j)|}
{\N({\frak d}_j)^{1/ R }\N({\frak d}'_j)^{1/\log R}}\ll
R^{O_{s}(1)}.$$ We have $$\sum_{{\frak d},{\frak d}'}\omega(({\frak
d}_j\cap{\frak d}'_j)_{1\leq j\leq
s})\prod_{j=1}^s\frac{|\mu_{K}({\frak d}_j)\mu_{K}({\frak d}'_j)|}
{\N({\frak d}_j)^{1/ R }\N({\frak d}'_j)^{1/\log
R}}$$$$=\prod_{\wp}\sum_{{\frak d}_j,{\frak d}'_j\mid \wp,\forall
j=1,\cdots,s}\omega(({\frak d}_j\cap{\frak d}'_j)_{1\leq j\leq
s})\prod_{j=1}^s\frac{1} {{\rm N}{\frak d}_j^{\frac{1}{ R }}{\rm
N}{{\frak d}'_j}^{\frac{1}{ R }}}$$$$ =\prod_{\wp\nmid W}(1 + {\rm
N}\wp^{-1-\frac{1}{ R }})^{O(1)}$$$$ =\prod_{p}(1 + q^{(-1-\frac{1}{
R })\deg p})^{O(1)}=\zeta_{{\mathbb F}_q(t)}(1+\frac{1}{
R})^{O(1)}\ll R^{O(1)}.$$
 This completes the proof of the theorem.\endproof

\section{The auto-correlation of the truncated von Mangolt function} In this section
we shall establish the auto-correlation of the truncated von Mangolt
function.

The auto-correlation of the truncated von Mangolt function for the
rational number field was studied by Goldston-Y{\i}ld{\i}r{\i}m in
\cite{gy1-cor, gy2-cor, gy3-cor}, and by Green-Tao in \cite{gt-ap,
gt-lattice}. The auto-correlation of
the truncated von Mangolt function for the rational function field
were studied by \cite{thai}.

 We now prove the following.
\begin{theorem}\label{autocorrelation}The system $\{ \nu_{r} \}$ satisfies the $k$-auto-correlation condition.
\end{theorem}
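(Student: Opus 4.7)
The plan is to prove Theorem \ref{autocorrelation} by adapting the Mellin-transform strategy used in the proof of Theorem \ref{crosscorrelation}. Expand each factor $\nu_r(x+y_i)$ by writing $\Lambda_{K,R}^2((W(x+y_i)+\alpha)L(D)^{-1})$ as a double M\"obius sum over effective divisors $({\frak d}_i,{\frak d}'_i)$ weighted by $\mu_K({\frak d}_i)\mu_K({\frak d}'_i)\varphi(\deg{\frak d}_i/R)\varphi(\deg{\frak d}'_i/R)$. Taking the product over $i=1,\ldots,s$, summing $x$ over $(t^rI)\cap L(D)$, and inserting the Fourier representation
$$e^x\varphi(x)=\int_{-\sqrt{R}}^{\sqrt{R}}\hat{\varphi}(t)e^{-ixt}\,dt+O(R^{-B})$$
reduces matters to estimating a shifted Dirichlet series
$$G(t,t';y)=\sum_{{\frak d},{\frak d}'}\omega_y(({\frak d}_j\cap{\frak d}'_j)_{1\le j\le s})\prod_{j=1}^s\frac{\mu_K({\frak d}_j)\mu_K({\frak d}'_j)}{\N({\frak d}_j)^{(1+it_j)/R}\N({\frak d}'_j)^{(1+it'_j)/R}},$$
where the local density $\omega_y$ now encodes the congruence conditions ${\frak d}_j\le(W(x+y_j)+\alpha)+D$ in the presence of the shifts $y_1,\ldots,y_s$.

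Apply the Chinese remainder theorem to factor $\omega_y=\prod_{\wp}\omega_{y,\wp}$. For a prime $\wp\nmid W$ with $\wp\nmid W(y_i-y_j)$ for every pair $i<j$, the local factor $\omega_{y,\wp}$ coincides with the generic factor arising in the proof of Theorem \ref{crosscorrelation} and assembles into the expected main term. For $\wp\nmid W$ with $\wp\mid W(y_i-y_j)$ for some pair, the conditions $\wp\mid W(x+y_i)+\alpha+D$ and $\wp\mid W(x+y_j)+\alpha+D$ cease to be independent, and an elementary count shows $\omega_{y,\wp}$ gains an extra multiplicative weight bounded by $(1+O_s({\rm N}\wp^{-1-1/R}))^{O(s)}$.

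Combining the local estimates, carrying out the Fourier inversion exactly as in Theorem \ref{crosscorrelation}, and invoking the elementary inequality
$$\prod_{\wp\mid\prod_{i<j}(y_i-y_j)}(1+a_\wp)\le\sum_{i<j}\prod_{\wp\mid(y_i-y_j)}(1+a_\wp)^{O(s)}\qquad(a_\wp\ge 0)$$
one arrives at
$$\frac{1}{|(t^r I)\cap L(D)|}\sum_{x\in(t^rI)\cap L(D)}\prod_{i=1}^s\nu_r(x+y_i)\le\sum_{1\le i<j\le s}\tau_r(y_i-y_j),$$
with the dominating function defined by
$$\tau_r(y):=C_s\prod_{\wp\nmid W,\ \wp\mid Wy}(1+O_s({\rm N}\wp^{-1-1/R}))^{O(s^2)}$$
for a sufficiently large constant $C_s=C_s(s,k,D)$.

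The main obstacle is the moment bound $\frac{1}{|(t^r I)\cap L(D)|}\sum_x\tau_r^M(x)=O_M(1)$, uniformly in $r$. Here the $W$-trick is decisive: since $W$ is the product of all irreducible polynomials of degree $\le w=\log\log r$, every prime entering the product defining $\tau_r$ has norm ${\rm N}\wp>q^w$, making the resulting Euler product absolutely convergent. Expanding $\tau_r^M$ through its Euler factors and invoking the Chinese remainder theorem once more reduces the average to a product of the form $\prod_{\wp\nmid W}(1+O_{M,s}({\rm N}\wp^{-2+o(1)}))$, which is bounded by $O_{M,s}(1)$ thanks to the zeta-function estimate $\zeta_K(z)=({\rm Res}_{z=1}\zeta_K(z))/(z-1)+O(1)$ already used in Theorem \ref{crosscorrelation}. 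This final verification parallels the auto-correlation analysis of Green--Tao in the integer setting and of Thai \cite{thai} in the $\mathbb{F}_q(t)$ setting, and requires no substantially new technical ingredients beyond those developed in the preceding section.
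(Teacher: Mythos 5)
Your proposal reproduces the paper's Mellin-transform and Euler-product treatment of the shifted correlation (which is precisely what the paper's lemma establishes), then makes explicit the two steps the paper leaves implicit: converting the product bound $\prod_{i<j}\prod_{\wp\mid(y_i-y_j)}(1+O_s(1/{\rm N}\wp))$ into a sum $\sum_{i<j}\tau_r(y_i-y_j)$ by AM--GM, and checking the moment bound on $\tau_r$, where the $W$-trick is indeed what makes the relevant Euler product converge. Two cosmetic slips in your displayed AM--GM inequality: the left side should be $\prod_{i<j}\prod_{\wp\mid(y_i-y_j)}(1+a_\wp)$ (the quantity the lemma actually bounds) rather than $\prod_{\wp\mid\prod_{i<j}(y_i-y_j)}(1+a_\wp)$, and the exponent must be $s(s-1)/2=O(s^2)$ rather than $O(s)$, since with exponent only $O(s)$ the inequality fails (take all pair-factors equal and large); your definition of $\tau_r$ already carries the $O(s^2)$ exponent, so the rest of the argument stands.
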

The above theorem follows from the following lemma.
\begin{lemma}Let $I$ be any open compact ${\mathbb F}_q[[1/t]]$-module in $K_{\infty}$. Then
$$\frac{1}{|(t^rI)\cap L(D) |}
\sum\limits_{x\in (t^rI)\cap L(D) }\prod_{i=1}^s
  \nu_{r} (x +
y_i) \ll \prod_{1\leq i<j\leq s}\prod_{\wp\mid(y_i-y_j)}(1+
O_{s}(\frac{1}{{\rm N}\wp}))$$ uniformly for all $s$-tuples $y\in
L(D) ^s$ with distinct coordinates.\end{lemma}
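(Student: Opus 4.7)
The plan is to mimic the proof of Theorem \ref{crosscorrelation} (the cross-correlation estimate) but to track the ``diagonal'' contribution that appears when a prime $\wp$ divides some difference $y_i-y_j$. First I would substitute the definition
$$\nu_r(x+y_i) = \frac{\phi_K(W) R\cdot {\rm Res}_{z=1}\zeta_K(z)}{c_{\varphi}\, q^{\deg W[K:{\mathbb F}_q(t)]}}\,\Lambda_{K,R}^2\bigl((W(x+y_i)+\alpha)L(D)^{-1}\bigr),$$
expand each $\Lambda_{K,R}^2$ as a double divisor sum $\sum_{{\frak d}_i,{\frak d}'_i}\mu_K({\frak d}_i)\mu_K({\frak d}'_i)\varphi(\tfrac{\deg{\frak d}_i}{R})\varphi(\tfrac{\deg{\frak d}'_i}{R})$, and interchange the resulting sum with the average over $x\in(t^rI)\cap L(D)$. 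The inner average then produces an arithmetic local density
$$\omega_y\bigl(({\frak d}_i\cap{\frak d}'_i)_{1\le i\le s}\bigr)=\frac{\bigl|\{x\in L(D)/L(D-{\frak d}):\ {\frak d}_i\le (W(x+y_i)+\alpha)+D,\ \forall i\}\bigr|}{({\rm N}{\frak d})^{m}},$$
where ${\frak d}={\rm l.c.m.}({\frak d}_i\cap{\frak d}'_i)$.

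Next, using the contour-type identity $e^x\varphi(x)=\int_{-\sqrt R}^{\sqrt R}\hat\varphi(t)e^{-ixt}\,dt+O(R^{-B})$, I would reduce the problem to estimating the multiplicative function
$$F_y(t,t')=\sum_{{\frak d},{\frak d}'}\omega_y(({\frak d}_i\cap{\frak d}'_i)_{1\le i\le s})\prod_{i=1}^s\frac{\mu_K({\frak d}_i)\mu_K({\frak d}'_i)}{\N{\frak d}_i^{(1+it_i)/R}\N{\frak d}'_i^{(1+it'_i)/R}},$$
for $(t,t')\in[-\sqrt R,\sqrt R]^{2s}$. By the Chinese remainder theorem, $\omega_y$ factors as $\omega_y=\prod_\wp\omega_{y,\wp}$, so $F_y$ has an Euler product. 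The heart of the argument is the local analysis at each prime $\wp$.

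For $\wp\nmid W$ and $\wp$ not dividing any difference $y_i-y_j$, the images $(W(y_i-y_j)+\text{stuff})$ remain pairwise distinct modulo $\wp$, and the local factor is the same as in the cross-correlation case, namely $1+O_s(\N\wp^{-2})$ after combining with the leading ratio of zeta factors. When $\wp\mid y_i-y_j$ for some pair $(i,j)$, the congruence conditions defining $\omega_{y,\wp}$ collapse partially, and the local factor picks up an additional contribution of size $O_s(\N\wp^{-1})$. Factoring out the ``universal'' Euler product that matches the cross-correlation estimate (and which, after inserting the Fourier integrals, produces the normalization $\nu_r$ has been chosen to cancel), the remaining extra factors assemble into
$$\prod_{1\le i<j\le s}\prod_{\wp\mid(y_i-y_j)}\bigl(1+O_s(\tfrac{1}{\N\wp})\bigr),$$
which is exactly the claimed upper bound.

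The main obstacle will be the bookkeeping at primes $\wp\mid(y_i-y_j)$: one needs to check that each such ``bad'' prime contributes at most a single factor $1+O_s(\N\wp^{-1})$ per pair, and that the $y$-independent part of the Euler product, when combined with the Fourier integral against $\hat\varphi(t)\hat\varphi(t')$ and the $c_\varphi$ normalization, gives an absolute $O(1)$ (so that all of the $y$-dependence is captured by the singular product). Once this local-global decomposition is in place, the asymptotic $\zeta_K(z)=\tfrac{{\rm Res}_{z=1}\zeta_K(z)}{z-1}+O(1)$ and the standard estimate $\prod_{\wp\mid W}(1+\tfrac{\log\N\wp}{\N\wp})=O(e^{\log^2 w})$, both used in the cross-correlation proof, finish the bound; the whole argument then implies the auto-correlation condition by choosing $\tau_r(y):=\prod_{\wp\mid y}(1+O_s(\N\wp^{-1}))^{\binom{s}{2}}$ (after symmetrizing), whose moments on $(t^rI)\cap L(D)$ are bounded via standard multiplicative function estimates.
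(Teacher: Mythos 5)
Your proposal follows essentially the same route as the paper's proof: expand $\Lambda_{K,R}^2$ as a double divisor sum, interchange with the average over $x$ to extract a local density $\omega_2$, pass to a Dirichlet series $F_2(t,t')$ via the Fourier identity for $e^x\varphi(x)$, factor by the Chinese remainder theorem into an Euler product, and separate the primes $\wp\mid\Delta:=\prod_{i\neq j}(y_i-y_j)$ (each contributing $1+O_s(1/{\rm N}\wp)$) from the rest (which reassemble against the zeta factors into the chosen normalization of $\nu_r$). Two small slips that do not affect the outcome: with only a single averaging variable $x$ the denominator in your $\omega_y$ should be ${\rm N}{\frak d}$ rather than $({\rm N}{\frak d})^m$, and for $\wp\nmid W\Delta$ the local density actually \emph{vanishes} when $\wp^2\mid\prod({\frak d}_i,\wp)$ (because the congruences are then incompatible), rather than merely being $O({\rm N}\wp^{-2})$ as in the cross-correlation case.
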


\proof We may assume that
$$\Delta:=\prod_{i\neq j}(y_i-y_j)\neq0.$$
Define
$$\omega_2(({\frak d}_i)_{1\leq i\leq s})
=\frac{|\{x\in L(D) / L(D-{\frak d}): {\frak
d}_i\leq(Wx + h_i)+D,\forall i=1,\cdots,s\}|}{{\rm
N}{\frak d}},$$ where $h_i=Wb(y)+Wy_i+\alpha$.
Then $$\frac{1}{|(t^rI)\cap L(D) |}
\sum\limits_{x\in (t^rI)\cap L(D) }(\frac{ q^{ [K :{\mathbb F}_q(t)]\deg W }}{\phi_K(W)
R})^{s}\prod_{i=1}^s
  \nu_{r} (x +
y_i)$$$$=\sum_{{\frak d},{\frak d}'}\omega_2(({\frak d}_i\cap{\frak
d}'_i)_{1\leq i\leq s})\prod_{i=1}^s\mu_{K}({\frak
d}_i)\mu_{K}({\frak d}'_i)\varphi(\frac{ \deg  {\frak d}_i}{\log
R})\varphi(\frac{ \deg  {\frak d}'_i}{ R }),$$ where ${\frak
d}$ and ${\frak d}'$ run over $s$-tuples of divisors on $ C $.
Define
$$F_2(t,t')=\sum_{{\frak d},{\frak d}'}\omega_2(({\frak
d}_i\cap{\frak d}'_i)_{1\leq i\leq
s})\prod_{j=1}^s\frac{\mu_{K}({\frak d}_j)\mu_{K}({\frak d}'_j)}
{\N({\frak d}_j)^{\frac{1+it_j}{ R }}\N({\frak
d}'_j)^{\frac{1+it'_j}{ R }}},\ t,t'\in{\mathbb R}^s,$$where
${\frak d}$ and ${\frak d}'$ run over $s$-tuples of ideals of $ O_C
.$

For all $B>0$, we have
$$\frac{1}{|(t^rI)\cap L(D) |}
\sum\limits_{x\in (t^rI)\cap L(D) }(\frac{ q^{ [K :{\mathbb F}_q(t)]\deg W }}{\phi_K(W)
R})^{s}\prod_{i=1}^s
  \nu_{r} (x +
y_i)$$$$=\int_{[-\sqrt{ R },\sqrt{ R }]^s}\int_{[-\sqrt{ R },\sqrt{ R }]^s}
F_2(t,t')\psi(t)\psi(t')dtdt'$$$$+O_B(
R^{-B})\cdot\sum_{{\frak d},{\frak d}'}\omega_2(({\frak
d}_i\cap{\frak d}'_i)_{1\leq i\leq
s})\prod_{j=1}^s\frac{|\mu_{K}({\frak d}_j)\mu_{K}({\frak d}'_j)|}
{\N({\frak d}_j)^{1/ R }\N({\frak d}'_j)^{1/ R }}.
$$
Hence we are reduced to prove the following.
$$\sum_{{\frak d},{\frak d}'}\omega_2(({\frak
d}_i\cap{\frak d}'_i)_{1\leq i\leq
s})\prod_{j=1}^s\frac{|\mu_{K}({\frak d}_j)\mu_{K}({\frak d}'_j)|}
{\N({\frak d}_j)^{1/ R }\N({\frak d}'_j)^{1/
R}}\ll R^{O_{s}(1)},$$ and, for $t,t'\in[-\sqrt{\log
R},\sqrt{ R }]^s$,
$$
F_2(t,t') \ll (\frac{ q^{ [K :{\mathbb F}_q(t)]\deg W }}{\phi_K(W)
R})^{s}\prod_{\wp\mid\Delta,\wp\nmid W}(1+ O_{s}(\frac{1}{{\rm
N}\wp}))\prod_{j=1}^s\frac{(1+|t_j|)(1+|t'_j|)}{
(2+|t_j|+|t'_j|)}.$$

We prove the second inequality but omit the proof of first one.
Applying the Chinese remainder theorem, one can show that
$$\omega_2(({\frak
d}_i)_{1\leq i\leq s})=\prod_{\wp}\omega_2(({\frak d}_i,\wp)_{1\leq
i\leq s}).$$ One can also show that
$$\omega_2((({\frak
d}_i,\wp))_{1\leq i\leq s})=\left\{
                     \begin{array}{ll}1, & \hbox{ } \prod_{i=1}^s({\frak
d}_i,\wp)=(1),\\
0,&\hbox{}  \prod_{i=1}^s({\frak d}_i,\wp)\neq(1), \wp|W.
                     \end{array}
                   \right.
$$
And, if $\wp\nmid W$ and $w$ is sufficiently large, then one can
show that
$$\omega_2((({\frak
d}_i,\wp))_{1\leq i\leq s})\left\{
                    \begin{array}{ll}
                      =1/{\rm N}\wp, & \hbox{} \prod_{i=1}^s({\frak d}_i,\wp)=\wp\\
                     =0, & \hbox{}\wp^2\mid \prod_{i=1}^s({\frak
d}_i,\wp),\wp\nmid\Delta,\\
\leq1/{\rm N}\wp, & \hbox{}\wp^2\mid \prod_{s\in S}({\frak
d}_s,\wp),\wp\mid\Delta.
                    \end{array}
                  \right.
$$
It follows that
$$
F_2(t,t')=\prod_{\wp}\sum_{{\frak d}_i,{\frak d}'_i\mid \wp,\forall
i=1,\cdots,s}\omega_2(({\frak d}_i\cap{\frak d}'_i)_{1\leq i\leq
s})\prod_{j=1}^s\frac{\mu_{K}({\frak d}_j)\mu_{K}({\frak d}'_j)}
{{\rm N}{\frak d}_s^{\frac{1+it_j}{ R }}{\rm N}{{\frak
d}'_j}^{\frac{1+it'_j}{ R }}}
$$$$ =\prod_{\wp\nmid W\Delta}(1 + \sum_{j=1}^s-{\rm
N}\wp^{-1-\frac{1+it_j}{ R }} -{\rm N}\wp^{-1-\frac{1+it'_j}{
R}}+ {\rm N}\wp^{-1-\frac{2+it_j+it'_j}{ R }})\prod_{\wp\nmid
W,\wp\mid\Delta}(1+ O_{s}(\frac{1}{{\rm N}\wp}))$$$$\ll
\prod_{\wp\mid\Delta,\wp\nmid W}(1+ O_{s}(\frac{1}{{\rm
N}\wp}))\prod_{j=1}^s\prod_{\wp\nmid W\Delta}\frac{(1 -{\rm
N}\wp^{-1-\frac{1+it_j}{ R }})(1 -{\rm
N}\wp^{-1-\frac{1+it'_j}{ R }})}{ (1-{\rm
N}\wp^{-1-\frac{2+it_j+it'_j}{ R }})}$$
$$ \ll (\frac{ q^{[K :{\mathbb F}_q(t)] \deg W }}{\phi_K(W)})^{s}\prod_{\wp\mid\Delta,\wp\nmid W}(1+
O_{s}(\frac{1}{{\rm N}\wp})) \prod_{j=1}^s \frac{
\zeta_K(1+\frac{2+it_j+it'_j}{ R })}{\zeta_K(1+\frac{1+it_j}{\log
R})\zeta_K(1+\frac{1+it'_j}{ R })}$$$$
 \ll (\frac{ q^{ [K :{\mathbb F}_q(t)] \deg W}}{\phi_K(W) R })^{s}\prod_{\wp\mid\Delta,\wp\nmid W}(1+
O_{s}(\frac{1}{{\rm N}\wp}))\prod_{j=1}^s\frac{(1+|t_j|)(1+|t'_j|)}{
(2+|t_j|+|t'_j|)}.$$
 This completes the proof of the lemma.\endproof
\section{Proof of the main theorem}
In this section we prove Theorem \ref{main}.
We begin with the following lemma.
\begin{lemma}There is a positive constant $c_K$ such that
every principal divisor on $C$ is the divisor of a function $\xi$
satisfying
$${\rm ord}_P(\xi)\geq \frac{-\deg(\xi)}{[K:{\mathbb
F}_q(t)]}-c_K,\ \forall P\mid\infty.$$
\end{lemma}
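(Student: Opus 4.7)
The plan is to use the function-field analog of Dirichlet's unit theorem for the coordinate ring $O_C$, together with a short lattice-covering argument. Given a principal divisor $D$ on $C$, fix any $f\in K^\times$ with $(f)=D$; every other representative $\xi\in K^\times$ with $(\xi)=D$ on $C$ has the form $\xi=uf$ for some unit $u\in O_C^\times$. So we must choose $u$ to control ${\rm ord}_P(uf)={\rm ord}_P(f)+{\rm ord}_P(u)$ at each $P\mid\infty$ from below.

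The logarithmic embedding
\[
\ell\colon O_C^\times \longrightarrow \bigoplus_{P\mid\infty}{\mathbb Z}, \qquad u\mapsto ({\rm ord}_P(u))_{P\mid\infty},
\]
has finite kernel ${\mathbb F}_q^\times$, and by the Dirichlet-type theorem for the affine curve $C$ its image $\Lambda$ is a lattice of full rank $|\infty|-1$ in the hyperplane
\[
H = \Bigl\{ x \in {\mathbb R}^{|\infty|} : \sum_{P\mid\infty} x_P\deg P = 0 \Bigr\},
\]
since a unit of $O_C$ has divisor on $\hat{C}$ supported at infinity with total degree $0$. Because $\Lambda$ has finite covolume in $H\otimes{\mathbb R}$, there is a constant $c_0=c_0(K,t)$ such that every $y\in H$ lies within sup-norm distance $c_0$ of some element of $\Lambda$.

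Let $a_P={\rm ord}_P(f)$ and $d=\deg D$; the vanishing of $\deg(f)_{\hat{C}}$ gives $\sum_P a_P\deg P=-d$. Set $e_P=-{\rm ord}_P(t)$, so that $\sum_P e_P\deg P=[K:{\mathbb F}_q(t)]$, and choose the target $y\in H$ with $y_P = -a_P - e_P d/[K:{\mathbb F}_q(t)]$ (one checks $\sum_P y_P\deg P=0$). Pick $u\in O_C^\times$ with $|{\rm ord}_P(u)-y_P|\leq c_0$ and set $\xi=uf$. Then $(\xi)=D$ and
\[
{\rm ord}_P(\xi) \;=\; a_P+{\rm ord}_P(u) \;\geq\; -\frac{e_P\,d}{[K:{\mathbb F}_q(t)]}-c_0,
\]
which yields the stated lower bound after choosing $c_K$ large enough to absorb $c_0$ and the ramification indices $e_P$ (both bounded in terms of $K$ and $t$).

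The main obstacle is the function-field Dirichlet unit theorem itself---the assertion that $\Lambda$ has full rank $|\infty|-1$ inside $H$---which is equivalent to the finiteness of the divisor class group of $\hat{C}$ and is a standard but non-trivial input from the arithmetic of function fields. Once this is granted, the rest is the short lattice-packing step described above; some care is also needed in bundling the ramification data at infinity into the final constant $c_K$.
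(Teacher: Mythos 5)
Your argument is essentially the paper's: both proofs invoke the function-field Dirichlet unit theorem (that the image of $O_C^\times$ under $u\mapsto({\rm ord}_P(u))_{P\mid\infty}$ is a full-rank lattice in the degree-zero hyperplane) and then do a lattice step to adjust a fixed representative $f$ by a unit so that the valuations at infinity are roughly equidistributed. The paper writes the shifted order vector in a basis of fundamental units $y_1,\dots,y_r$ and reduces the coefficients to $[0,1)$; you invoke the finite covering radius $c_0$ of the unit lattice $\Lambda$. These are the same argument; a fundamental parallelepiped bound and a covering-radius bound are interchangeable here.

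One remark on the last step. You end with $v_P(\xi)\geq -e_P\deg(\xi)/[K:{\mathbb F}_q(t)]-c_0$ (with $v_P$ the normalized valuation of $K_P$) and then say you can absorb the factor $e_P$ into the constant. Read literally, that would be false: for $e_P>1$ the gap $(e_P-1)\deg(\xi)/[K:{\mathbb F}_q(t)]$ grows with $\deg(\xi)$. What saves the claim is that the ${\rm ord}_P$ in the lemma is not $v_P$ but the valuation normalized so that ${\rm ord}_P(1/t)=1$, i.e.\ ${\rm ord}_P=v_P/e_P$; this is exactly what makes the paper's identity $\sum_{P\mid\infty}[K_P:{\mathbb F}_q((1/t))]\bigl({\rm ord}_P(\xi)+\deg(\xi)/[K:{\mathbb F}_q(t)]\bigr)=0$ hold. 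Dividing your inequality by $e_P$ gives ${\rm ord}_P(\xi)\geq -\deg(\xi)/[K:{\mathbb F}_q(t)]-c_0/e_P$, which is the statement with $c_K=c_0$. So the proof is sound once the normalization is made explicit; replace the phrase about absorbing the ramification indices by this one-line division.
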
 \proof
Let $\varepsilon_1,\cdots,\varepsilon_r$ be a system of fundamental
units of $O_K$. For each $i=1,\cdots,r$, set
$$y_i:=({\rm ord}_P(\varepsilon_i))_{P\mid\infty}.$$ It is known that the
vectors $y_1,\cdots,y_r$ are linearly independent. Let $\xi$ be a
nonzero function in $K$. We have
$$\sum_{P\mid\infty}[K_P:{\mathbb F}_q((1/t))]({\rm ord}_P(\xi)+
\frac{\deg(\xi)}{[K:{\mathbb F}_q(t)]})=0.$$ It follows that the
vector
$$({\rm ord}_P(\xi)+ \frac{\deg(\xi)}{[K:{\mathbb
F}_q(t)]})_{P\mid\infty}$$ as well as the vectors $y_1,\cdots, y_r$
are orthogonal to the vector $([K_P:{\mathbb
F}_q((1/t))])_{P\mid\infty}$. So
$$({\rm ord}_P(\xi)+ \frac{\deg(\xi)}{[K:{\mathbb
F}_q(t)]})_{P\mid\infty} =a_1y_1+\cdots+a_ry_r.$$ Multiplying $\xi$
by a function in $O_K^{\times}$ if necessary, we may assume that
$0\leq a_i< 1$. It follows that, for every $P\mid\infty$,
$${\rm ord}_P(\xi)\geq \frac{-\deg(\xi)}{[K:{\mathbb
F}_q(t)]}-c_K.$$ The lemma now follows.
\endproof
For each $r\in {\mathbb N}$, and for each $\alpha\in L(D) $ with $(\alpha,W
L(D) )= L(D) $, set
$$A_{r,\alpha}=\{x\in L(D)\cap B_r \mid (Wx+\alpha)+D \text{ is prime}\}.$$
By Theorem \ref{rs}, Theorem \ref{main} follows from the following
theorem.
\begin{theorem}Let $c$ be a sufficiently large positive constant
depending only on $k$, $C$ and $D$. For each $r\in {\mathbb N}$,
there is a number $\alpha_r\in(t^{\deg W}G)\cap L(D) $
 with $((\alpha_r)+D,W)=1 $ such that the system $\{A_{r,\alpha_r}\cap
B_{r-c}\}$ has positive upper density relative to $\{ \nu_{r}
\}$.\end{theorem}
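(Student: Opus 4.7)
The plan is to identify $\alpha_r$ by averaging over all admissible residues and applying a pigeonhole argument. Let $\mathcal{A}_r := \{\alpha \in (t^{\deg W}G)\cap L(D) : ((\alpha)+D, W)=1\}$, and for $\alpha\in\mathcal{A}_r$ set
$$N(\alpha) := \sum_{x \in A_{r,\alpha}\cap B_{r-c}}\nu_r(x), \qquad M(\alpha):=\sum_{x\in L(D)\cap B_r}\nu_r(x).$$
It suffices to show that the average of $N(\alpha)/M(\alpha)$ over $\alpha\in\mathcal{A}_r$ is bounded below by a positive constant independent of $r$; pigeonhole then supplies a suitable $\alpha_r$.

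First I would evaluate $N(\alpha)$ pointwise. For $x\in A_{r,\alpha}\cap B_{r-c}$ the divisor $P:=(Wx+\alpha)+D$ is prime on $C$. Using $\deg((f)+D)=\deg D - \sum_{P\mid\infty}{\rm ord}_P(f)\deg P$ together with the hypotheses $x\in B_{r-c}$ and $\alpha\in t^{\deg W}G$, one checks that $\deg P \geq R$ as soon as $c$ (depending only on $k,C,D$) is chosen large enough. Since $\varphi$ is supported in $[-1,1]$, this forces $\Lambda_{K,R}((Wx+\alpha)L(D)^{-1})=1$, and consequently $\nu_r(x)=\kappa_r$ where $\kappa_r := \phi_K(W)R\cdot{\rm Res}_{z=1}\zeta_K(z)/(c_\varphi q^{\deg W[K:{\mathbb F}_q(t)]})$.

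Second I would analyze the total sum $\sum_{\alpha\in\mathcal{A}_r}N(\alpha)$. The change of variables $y=Wx+\alpha$ converts the double sum into $\kappa_r$ times the count of $y\in(t^{\deg W}G)\cap L(D)\cap B_r$ with $(y)+D$ prime and $((y)+D,W)=1$, each weighted by a factor independent of $y$. The function-field prime number theorem applied in residue classes modulo $W$, combined with the fact that the density of $y$ coprime to $W$ equals $\phi_K(W)/q^{\deg W[K:{\mathbb F}_q(t)]}$, yields
$$\sum_{\alpha\in\mathcal{A}_r}N(\alpha) \geq c_1\,|\mathcal{A}_r|\cdot|L(D)\cap B_r|$$
for some positive constant $c_1$ depending only on $\varphi,C,D$; the $W$-trick normalization in the definition of $\nu_r$ is precisely what makes the factors $\phi_K(W)$, $R$, and ${\rm Res}_{z=1}\zeta_K(z)$ cancel cleanly. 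Meanwhile, applying the $k$-cross-correlation condition (Theorem~\ref{crosscorrelation}) with $s=m=1$ and the trivial linear form gives $M(\alpha)=(1+o(1))|L(D)\cap B_r|$ uniformly in $\alpha$, so the averaged density is at least $c_1+o(1)>0$, and pigeonhole produces $\alpha_r\in\mathcal{A}_r$ of positive upper density.

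The main obstacle is the bookkeeping in the second step: the geometric $W$-trick requires the function-field prime number theorem with remainder uniform over residue classes modulo $W$ (a modulus of slowly growing degree $\deg W\approx q^{\log\log r}$), and a careful verification that $\deg((Wx+\alpha)+D)\geq R$ for all $x\in B_{r-c}$ and $\alpha\in t^{\deg W}G$, so that $\Lambda_{K,R}$ collapses to $1$ exactly on the prime locus of interest. Once these are in place, the averaging argument above yields the claimed positive upper density.
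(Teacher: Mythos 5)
Your proposal shares the paper's global strategy (average $\sum_{x}\nu_r(x)$ over admissible $\alpha$ and apply pigeonhole), but there are two substantive gaps, one of which inverts a key inequality.

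\textbf{The degree bound is backwards.} You claim that for $x\in A_{r,\alpha}\cap B_{r-c}$ one has $\deg P\geq R$ once $c$ is large, so that $\nu_r(x)=\kappa_r$ pointwise. But $x\in B_{r-c}$ means ${\rm ord}_\infty(x)>-(r-c)$, which \emph{caps the poles of $x$ from above}, hence gives an \emph{upper} bound on $\deg((Wx+\alpha)+D)=\deg D-\sum_{P\mid\infty}{\rm ord}_P(Wx+\alpha)\deg P$, not a lower one. (A constant $x$ lies in $B_{r-c}$ yet produces a prime of bounded degree.) So the identity $\nu_r(x)=\kappa_r$ on all of $A_{r,\alpha}\cap B_{r-c}$ is false. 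The paper works with the correct direction: $x\in B_{r-c}$ and $\alpha\in t^{\deg W}G$ give $\deg\wp<\deg D+[K:{\mathbb F}_q(t)](r-c-c_K-\deg W)$, and the useful fact is that this upper bound is much larger than $R\asymp r/(8|O_C\cap B_k|2^{|O_C\cap B_k|})$. One then lower-bounds the sum by the primes with $R\leq\deg\wp$ (where $\Lambda_{K,R}^2(\wp)=1$) and discards the rest using $\Lambda_{K,R}^2\geq0$.

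\textbf{The change of variables conceals the unit problem.} Rewriting the double sum via $y=Wx+\alpha$ and appealing to the function-field prime number theorem in residue classes mod $W$ does not by itself show that the pairs $(x,\alpha)$ capture a positive proportion of primes $\wp\in[D]$. The representation $\wp=(y)+D$ with $y\in L(D)$ is only determined up to multiplication by a unit in $O_C^\times$, which is infinite when $\hat C$ has more than one place at infinity; one must show some unit translate of $y$ lands in the archimedean box $B_{r-c-\deg W+O(1)}\cap t^{\deg W}G$. This is exactly what the paper's lemma (on fundamental units and the vectors $({\rm ord}_P(\varepsilon_i))_{P\mid\infty}$) supplies, and it is the ingredient your argument silently assumes. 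Note also that, with that lemma in hand, the paper never needs a prime number theorem in progressions mod $W$ uniform in $w=\log\log r$; it only needs an \emph{injection} of primes in $[D]$ of bounded degree into pairs $(x,\alpha)$, plus the crude count of primes of degree in $[R,D_0]$ in a fixed class, which is much softer than the asymptotic you are invoking.
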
 \proof By the above lemma, there is a positive
constant $c_K$ such that every principal divisor on $C$ is the
divisor of a function $\xi$ satisfying
$${\rm ord}_P(\xi)\geq \frac{-\deg(\xi)}{[K:{\mathbb
F}_q(t)]}-c_K,\ \forall P\mid\infty.$$ It follows that, for any
divisor ${\frak n}\in[D]$, there is an element $x\in L(D) $ such
that ${\frak n}=(x)+D$ and that $${\rm ord}_P(x)\geq \frac{\deg
D-\deg{\frak n}}{[K:{\mathbb F}_q(t)]}-c_K,\ \forall P\mid\infty.$$
In particular, for each $r\in {\mathbb N}$, and for any prime
divisor $\wp\in[ D]$ satisfying $(\wp,W)=1 $ and $\deg\wp<\deg D+[K
:{\mathbb F}_q(t)](r-c-c_K-\deg W)$, there is a function $x\in
A_{r,\alpha}\cap B_{r-c}$, and a function $\alpha\in(t^{\deg
W}G)\cap L(D) $ with $((\alpha)+D,W)=1$ such that $\wp=(Wx+\alpha)
+D$. So
$$\sum_{\stackrel{((\alpha)+D,W)=1 }{\alpha\in(t^{\deg W}G)\cap L(D) }}\sum_{x\in A_{r,\alpha}\cap
B_{r-k}}\Lambda_{K,R}^2((Wx+\alpha)+D
)$$$$\geq\sum_{\stackrel{\wp\in[D],(\wp,W)=1}{\deg\wp<\deg D+[K
:{\mathbb F}_q(t)](r-c-c_K-\deg W)}}\Lambda_{K,R}^2(\wp)\gg\frac{
c_{\varphi} q^{\deg W [K:{\mathbb F}_q(t)]}}{ R\cdot{\rm Res}_{z=1}
\zeta_K(z)}\cdot |L(D)\cap B_r|.$$ The theorem now follows by the
pigeonhole principle.
\endproof

\end{document}